\numberwithin{equation}{section}
\newtheorem{defn}{Definition}[section]
\newtheorem{theorem}{Theorem}[section]
\newtheorem{corollary}[theorem]{Corollary}
\newtheorem{lemma}[theorem]{Lemma}
\newtheorem{remark}[theorem]{Remark}
\def \begineq{\begin{equation}}
\def \endeq{\end{equation}}
\def \bb{\mathbb}
\def \CC{{\bb{C}}}
\def \QQ{{\bb{Q}}}
\def \RR{{\bb{R}}}
\def \ZZ{{\bb{Z}}}
\def \({\left(}
\def \){\right)}
\def \<{\langle}
\def \>{\rangle}
\def \bar{\overline}
\begin{document}

\title[Blowdowns on
 four dimensional quasitoric orbifolds]{Blowdowns and McKay correspondence
  on four dimensional quasitoric orbifolds }

\author[S. Ganguli]{Saibal Ganguli}

\address{Stat-Math Unit, Indian
Statistical Institute, Kolkata, India, and Departamento de
Matem\'aticas, Universidad de los Andes, Bogota, Colombia }

\email{saibalgan@gmail.com}

\author[M. Poddar]{Mainak Poddar}

\address{Stat-Math Unit, Indian
Statistical Institute, Kolkata, India, and Departamento de
Matem\'aticas, Universidad de los Andes, Bogota, Colombia}

\email{mainakp@gmail.com}

\subjclass[2000]{Primary 53C15; Secondary 53C10, 53D20}

\keywords{almost complex, blowdown, orbifold, quasitoric, McKay correspondence}
%MSC classification

\abstract
 We prove the existence of torus invariant almost complex
 structure on any positively omnioriented four dimensional
 primitive quasitoric orbifold. We construct pseudo-holomorphic
  blowdown maps for such orbifolds. We prove a version of McKay
  correspondence when the blowdowns are crepant.
 \endabstract

\maketitle

%{\Large
 \section{Introduction}\label{intro}

 Quasitoric orbifolds are generalizations or topological
 counterparts of simplicial projective toric varieties.
 They admit an action of the real torus of half
dimension such that the orbit space has the
 combinatorial type of a simple convex polytope.
  Davis and Januskiewicz
 \cite{[DJ]}, who introduced the notion of quasitoric space,
  showed that the formula for the cohomology ring of a quasitoric
  manifold, and hence of any nonsingular
  projective toric variety, may be deduced by purely
 algebraic topology methods.
  This was generalized to the orbifold case in
 \cite{[PS]}.

  In general quasitoric manifolds do not have integrable or almost
 complex structure. However, they always have stable almost complex structure.
 Moreover, positively omnioriented quasitoric manifolds
  have been known to have
 an almost complex structure, see \cite{[BP]}.
 It was recently proved by Kustarev \cite{[Kus], [Kus2]} that any positively
 omnioriented quasitoric manifold has an almost complex structure
which is torus invariant.
   We extend his result to four dimensional
 primitive quasitoric orbifolds, see theorem
 \ref{acs}. Note that for higher dimensional positively omnioriented
 quasitoric orbifolds the
existence of an almost complex structure, torus invariant or otherwise, remains
 an open problem. We hope to address this in future.

  Inspired by birational geometry of toric varieties, we introduce the
 notion of  blowdown into the realm of quasitoric orbifolds. Our blowdown maps
contract an embedded orbifold sphere (exceptional sphere) to a point. They
 admit a description in terms of a finite collection of coordinate charts, very much in
 the spirit of toric geometry. These maps are torus invariant, continuous and
 diffeomorphism of orbifolds away from the exceptional sphere, see theorem \ref{bldn}.
However they are not morphisms
of orbifolds near the  exceptional sphere. For a blowdown map between two
primitive positively omnioriented
quasitoric orbifolds of dimension four (see remark \ref{primi} and subsection \ref{omnio} for
definitions), we can choose almost complex structures
on them so that the blowdown is an analytic map  near the exceptional sphere
 and an almost complex morphism of orbifolds away from it, see theorem \ref{holo}.
 We explain in corollary \ref{smbd} that the blowdown is a pseudo-holomorphic map
 in the sense that it pulls back invariant pseudo-holomorphic
 functions to similar functions.

 We restrict our study to the case where the exceptional sphere has at most one
 orbifold singularity. Any singularity on a $4$-dimensional primitive positively
 omnioriented quasitoric orbifold may be resolved by a sequence of blowdowns of
 this type.  However as our method for studying pseudo-holomorphicity of blowdowns is
 not very amenable to induction, we leave the the study of such a sequence of
  blowdowns for future work.

     An immediate consequence
   of the existence of an almost complex structure is that Chen-Ruan
  orbifold cohomology \cite{[CR]} is defined.  We  define  when an
 almost complex blowdown is crepant, see definition \ref{defcrep}.
 We prove that the Betti
 numbers of  orbifold cohomology are preserved under a
 crepant blowdown. This is a form of McKay correspondence.
 Such correspondence has been widely studied for
 algebraic orbifolds.

 Masuda \cite{[Mas]} proved the existence of invariant almost complex structure
 on positively omnioriented four dimensional quasitoric manifolds,
 many of which are not toric varieties.
 This shows that our results are not redundant. We give  explicit examples in the
 last section. In fact, using  blowdown we can construct almost complex quasitoric
 orbifolds that are not toric varieties.

 We refer the reader to \cite{[ALR]} for relevant definitions and results regarding orbifolds.

\section{Preliminaries}\label{smooth}

 Many results of this section are part of folklore. However to set up our
notation and due to lack of a good reference, we give an explicit description.

 \subsection{Definition} Let $N$ be a free $\ZZ$-module of rank $2$. Let
  $ T_N := (N \otimes_{\ZZ} \RR) / N \cong \RR^2/ N $
 be the corresponding $2$-dimensional torus. A primitive vector in $N$, modulo sign,
  corresponds to a circle subgroup
 of $T_N$. More generally, suppose $M$ is a free submodule of $N$ of rank $m$. Then
 $T_M := (M \otimes_{\ZZ} \RR) /M $ is a torus of dimension $m$.
  Moreover there is a natural homomorphism of Lie
 groups $\xi_M: T_M \to T_N$ with finite kernel, induced by the inclusion $M \hookrightarrow N$.

\begin{defn}\label{tlambda} Define  $T(M)$ to be the image
  of $T_M$ under $\xi_M$.
 If $M$ is generated  by a vector $\lambda \in N$, denote
   $T(M)$ by $T(\lambda)$.
\end{defn}

\begin{defn} \cite{[PS]} A $4$-dimensional quasitoric orbifold $\bf Y$ is an orbifold whose underlying
  topological space $Y$ has a $T_N$ action, where $N$ is a fixed free
 $\ZZ$-module of rank $2$, such that the orbit space is (diffeomorphic to)
  a $2$-dimensional polytope $P$. Denote the projection map from $Y$ to $P$
  by $\pi: Y \to P$.
  Furthermore every point $x \in Y$ has
\begin{itemize}
\item[A1)]  a $T_N$-invariant neighborhood $ V$, \item[A2)] an
associated free  $\ZZ$-module  $M$ of rank $2$ with an
  isomorphism $\theta: T_M \to U(1)^2$ and an injective module homomorphism $i: M \to N$
  which induces a surjective covering homomorphism $\xi_M : T_M \to T_N $,
\item[A3)]   an orbifold chart
   $(\widetilde{V}, G, \xi)$ over $V$, where $\widetilde{V}$ is $\theta$-equivariantly diffeomorphic to an
    open set in $\CC^2$,  $G = {\ker} \xi_M $ and $\xi: \widetilde{V} \to V$ is an equivariant map i.e.
   $\xi(t\cdot y)= \xi_M(t)\cdot \xi(y)$ inducing a homeomorphism between $\widetilde{V}/G$ and $V$.
\end{itemize}
\end{defn}

 Fix $N$. Let $P$ be a convex polytope in $\RR^2$ with edges $E_i$, $i \in
 \mathcal{E}:= \{1,2,\ldots, m\}$. Identify the set of
 edges  with $\mathcal{E}$.
 A function $\Lambda: \mathcal{E} \to N $ is called a characteristic function for $P$ if
 $ \Lambda(i)$ and $\Lambda(j)$ are linearly independent whenever $E_i$ and $E_j$ meet at a vertex
 of $P$. We write $\lambda_i$ for $\Lambda(i)$ and call it a characteristic vector.

\begin{remark}\label{primi} In this article we  assume that all characteristic vectors are primitive.
Corresponding quasitoric orbifolds have been termed primitive quasitoric orbifold in \cite{[PS]}.
They are characterized by the codimension of singular set being greater than or equal to four.
\end{remark}

 Let $\Lambda$ be a characteristic function for $P$.
 For any face $F$ of $P$, let $N(F)$ be the submodule of $N$ generated by
 $\{ \lambda_i: F \subset E_i \} $. For any point $p \in P$, denote by $F(p)$
 the face of  $P$ whose relative interior contains $p$.
  Define an equivalence relation $\sim$ on the space
 $P \times T_N$ by
 \begin{equation} \label{defequi}
 (p,t) \sim (q,s) \; {\rm if \; and\; only\; if\;} p=q \; {\rm and} \; s^{-1}t \in T(N({F(p)}))
 \end{equation}

 The quotient space $X :=P \times T_N/ \sim$ can be given the
structure of a $4$-dimensional
  quasitoric orbifold. Moreover any  $4$-dimensional primitive quasitoric orbifold may be obtained in
  this way, see \cite{[PS]}. We refer to the pair $(P,\Lambda)$ as a model for the
  quasitoric orbifold.

   The space $X$ inherits an action of $T_N$ with orbit space
   $P$ from the natural action
  on $P \times T_N$. Let $\pi: X \to P$ be the associated quotient map.
   The space $X$ is a manifold if the characteristic vectors $\lambda_i$ and $\lambda_j$
  form a ${\ZZ}$-basis of $N$ whenever the edges $E_i$ and $E_j$ meet at a vertex.

   The points $\pi^{-1}(v) \in X$, where $v$ is any vertex of $P$, are fixed by the
   action of $T_N$. For simplicity we will denote the point $\pi^{-1}(v)$  by $v$ when there
   is no confusion.

 \subsection{Differentiable structure}
 Consider open
 neighborhoods $U_v \subset P$ of the vertices $v$  such that
  $U_v$ is the complement in $P$ of all edges
 that do not contain $v$.
  Let
 \begin{equation}
X_v := \pi^{-1}(U_v) =  U_v \times T_N / \sim
\end{equation}
 For a face $F$ of $P$ containing $v$ there is a natural inclusion of $N(F)$ in $N(v)$.
  It induces an injective homomorphism $T_{N(F)} \to T_{N(v)}$ since a basis of $N(F)$ extends
  to a basis of $N(v)$. We will regard $T_{N(F)}$ as a
  subgroup of $T_{N(v)}$ without confusion.

 Define an equivalence relation $\sim_v$ on $U_v \times T_{N(v)}$ by $(p,t)\sim_v (q,s)$ if
 $p=q$ and $s^{-1}t \in T_{N(F)}$ where $F$ is the face whose relative interior contains $p$.
 Then the space
\begin{equation}
 \widetilde{X}_v:= U_v \times T_{N(v)}/ \sim_v
\end{equation}
is $\theta$-equivariantly diffeomorphic to an open ball in $\CC^2$, where
   $\theta: T_{N(v)} \to U(1)^2$ is an isomorphism, see \cite{[DJ]}. This is also
   evident from the discussion on local models below.

  The map $\xi_{N(v)} : T_{N(v)} \to T_N$ induces a map $\xi_v: \widetilde{X}_v \to X_v$ defined by
   $\xi_v([(p,t)]^{\sim_v}) = [(p,\xi_{N(v)}(t)) ]^{\sim}$ on equivalence classes.
    The kernel of $\xi_{N(v)}$,  $G_v = N/N(v)$, is a finite
  subgroup of $T_{N(v)}$ and therefore has a natural smooth, free action on $T_{N(v)}$
  induced by the group operation.  This induces smooth action of $G_v$ on
  $\widetilde{X}_v$. This action is not free in general. Since $T_N \cong T_{N(v)}/G_v $,  $X_v$
 is homeomorphic to the quotient space $\widetilde{X}_v/G_v$.  An orbifold chart
 (or uniformizing system) on $X_v$ is
  given by $(\widetilde{X}_v, G_v, \xi_v)$.

  Up to homeomorphism we may regard the set $U_v$ as a cone $\sigma(v)$ with the
   same edges as $U_v$.
   The neighborhood $X_v$ is then homeomorphic to $\sigma(v) \times T_N/\sim$.
   We say that a local model for $X$ near $v$ consists of a cone $\sigma$
  and characteristic vectors, say, $\lambda_1$, $\lambda_2$ along its edges
   $E_1$  and $E_2$.

  Let $p_1, p_2$ denote the standard coordinates on  $\RR^2 \supset P$.
  Let $q_1,q_2$ be the coordinates on $N \otimes \RR$ with respect to the standard
  basis of $N$. They correspond to standard angular coordinates on $T_N$.
   The local model where $\sigma = \RR^2_{\ge}:= \{(p_1,p_2) \in \RR^2: p_i \ge 0 \} $ and
    the characteristic vectors are
    $(1,0)$ along $p_1=0$ and $(0,1)$ along $p_2=0$ is called standard.
     In this case  there is a homeomorphism from the $\RR^2_{\ge}
     \times T_N/\sim $ to $\CC^2 = \RR^4$
    given by
    \begin{equation}
    x_i= \sqrt{p_i} \cos (2 \pi q_i), \;\;
    y_i = \sqrt{p_i} \sin (2 \pi q_i) \;\; {\rm where} \; i=1,2.
    \end{equation}

     For any cone $\sigma(v) \subset \RR^2$ with arbitrary characteristic
     vectors we will define a canonical homeomorphism
      $\phi(v): \widetilde{X}_v  \to \RR^4$ as follows.
     Order the edges $E_1, E_2$ of $\sigma(v)$ so that the clockwise angle from $E_1$ to
     $E_2$ is less than $180^{\circ}$.
     Denote the coordinates of the vertex  $v$ by  $(\alpha, \beta)$.
     Let the equations of the edge $E_i$ be $a_i(p_1 - \alpha) + b_i(p_2 - \beta)=0$.
     Assume that the interior of $\sigma(v)$ is contained in the half-plane
     $a_i(p_1 - \alpha) + b_i(p_2 - \beta) \ge 0$.
     Suppose $\lambda_1 =(c_{11}, c_{21})$ and $\lambda_2=(c_{12}, c_{22})$ be the
     characteristic vectors assigned to $E_1$ and $E_2$ respectively.
     If $q_1(v), q_2(v)$ are angular coordinates of an element of $T_N$ with respect
     to the basis $\lambda_1, \lambda_2$ of $N \otimes \RR$, then  the
     standard coordinates $q_1,q_2$ may be expressed as
     \begin{equation}\label{cocoor}
     \left(  \begin{array}{c}  q_1 \\ q_2  \end{array}\right) =
     \left[ \begin{array}{cc} c_{11} & c_{12} \\ c_{21} & c_{22} \end{array} \right]
     \left( \begin{array}{c} q_1(v) \\ q_2(v)  \end{array} \right).
     \end{equation}

     Then define the homeomorphism $\phi(v): \widetilde{X}_v \to \RR^4$ by
     \begin{equation}\label{homeo}
      x_i = x_i(v):= \sqrt{p_i (v)} \cos(2 \pi q_i(v) ), \quad
      y_i = y_i(v):= \sqrt{p_i(v) } \sin( 2 \pi q_i(v) ) \quad {\rm for}\; i=1,2
     \end{equation}
     where
     \begin{equation}\label{homeod}
      p_i (v) =  a_i(p_1-\alpha) + b_i(p_2 - \beta), \quad i=1,2.
       \end{equation}
   Similar homeomorphism has been used in \cite{[Sym]}.

   Now consider the action of $G_v = N/N(v)$ on $\widetilde{X}_v$. An element
     of $G_v$ is represented by a vector $g =a_1 \lambda_1 + a_2 \lambda_2$ in
    $N$ where $a_1, a_2 \in  \QQ$.  The action of $g$ transforms the coordinates
    $(q_1(v), q_2(v))$ to $(q_1(v) + a_1, q_2(v) + a_2)$. If we write
    $z_i = x_i + \sqrt{-1} y_i$, then
    \begin{equation}\label{action}
     g\cdot (z_1, z_2) = (e^{2\pi \sqrt{-1}a_2} z_1, e^{2\pi \sqrt{-1}a_1} z_2)
     \end{equation}
Since $\lambda_1$ and $\lambda_2$ are both primitive, neither of $a_1, a_2$ is an
integer. Therefore the only orbifold singularity on $X_v$ is at the point with coordinates
 $z_1= z_2=0$, namely the vertex $v$.

We show the compatibility of the charts $(\widetilde{X}_v, G_v, \xi_v) $. Let $v_1$ and $v_2$
be two adjacent vertices. Assume that edges $E_1, E_2$  meet at $v_1$ and
edges $E_2, E_3$  meet at $v_2$. Let $\lambda_i $ be the characteristic
vector corresponding to $E_i$. Since all characteristic vectors are primitive, we may assume
without loss of generality that $\lambda_1 = (1,0)$. Suppose $\lambda_2 = (a,b)$ and
$\lambda_3 = (c,d)$. Let $\Delta= ad-bc$.

Up to choice of coordinates we may assume that the edge $E_1$ has equation $p_1=0$,
the edge $E_3$ has equation $p_2 = 0$, and the edge $E_2$ has equation
${\widehat p} =0$. Here $\widehat{p} = p_2 + s p_1 - t$ where
$s $ and $t$ are positive reals.
We assume that the quantities $p_1, p_2 $ and $\widehat{p}$
 are positive in the the interior of the polytope.

 We shall write down explicit coordinates on $\widetilde{X}_{v_i}$.  For this purpose it
    is convenient to express all angular coordinates in terms of $(q_1,q_2)$ by inverting
    equation \eqref{cocoor}.
    \begin{equation}\label{cocoor2}
     \left(  \begin{array}{c} q_1(v) \\ q_2(v)  \end{array}\right) =
     \left[ \begin{array}{cc} d_{11} & d_{12} \\ d_{21} & d_{22} \end{array} \right]
     \left( \begin{array}{c} q_1 \\ q_2 \end{array} \right)
     \end{equation}
     where the matrix $(d_{ij})$ is the inverse of the matrix $(c_{ij})$. Then by
 equations \eqref{homeo}, \eqref{homeod}, \eqref{cocoor2} we have the
following expressions for coordinates $z_i(v_j):= x_i(v_j) + \sqrt{-1}y_i(v_j)$
on $\widetilde{X}_{v_j}$.

\begin{equation}
 \begin{array}{ll}
       z_1(v_1) := \sqrt{p_1} e^{2\pi \sqrt{-1} (q_1 -\frac{a}{b}q_2)}, &
    z_2(v_1) := \sqrt{\widehat{p}} e^{2\pi \sqrt{-1} \frac{1}{b}q_2} \\
   \\
   z_1(v_2) := \sqrt{\widehat{p}}  e^{2\pi \sqrt{-1} \frac{dq_1- c q_2}{\Delta}}, &
   z_2 (v_2)  := \sqrt{p_2} e^{2\pi \sqrt{-1} ( \frac{-bq_1 + a q_2}{\Delta})}
\end{array}
\end{equation}

The coordinates on $\widetilde{X}_{v_2}$ are related to those on $\widetilde{X}_{v_1}$
over the intersection $X_{v_1} \bigcap X_{v_2}$ as follows.
\begin{equation}\label{trans}
\begin{array}{ll}
 z_1(v_2)= z_1(v_1)^{\frac{d}{\Delta}} z_2(v_1) \sqrt{p_1}^{\frac{-d}{\Delta} },
&  z_2(v_2) = z_1(v_1)^{\frac{-b}{\Delta}} \sqrt{p_2} \sqrt{p_1}^{\frac{b}{\Delta}}
\end{array}
\end{equation}

 Take any point $x \in X_{v_1} \bigcap X_{v_2} $. Let $\widetilde{x}$ be a
preimage of $x$ with respect to $\xi_{v_1}$. Choose a small ball $B(\widetilde{x},r)$
around  $\widetilde{x}$ such that $(g\cdot B(\widetilde{x},r)) \bigcap B(\widetilde{x},r)$
is empty for all nontrivial $g \in G_{v_1}$. Then $( B(\widetilde{x},r), \{1\},\xi_{v_1})$
is an orbifold chart around $x$. This chart admits natural embedding (or injection) into
the chart $(\widetilde{X}_{v_1}, G_{v_1}, \xi_{v_1})$ given by inclusion. We show that
for sufficiently small value of $r$, this chart embeds into
$(\widetilde{X}_{v_2}, G_{v_2}, \xi_{v_2})$ as well. Choose a branch of
$z_1(v_1)^{\frac{1}{\Delta}}$ so that the branch cut does not intersect
$ B(\widetilde{x},r)$. Assume $r$ to be small enough so that the functions
$z_1(v_1)^{\frac{d}{\Delta}}$ and $z_1(v_1)^{\frac{b}{\Delta}}$ are one-to-one
on $ B(\widetilde{x},r)$. Then  equation \eqref{trans} defines a smooth
embedding $\psi$ of $ B(\widetilde{x},r)$ into $\widetilde{X}_{v_2}$. Note that $p_1$
and $p_2$ are smooth nonvanishing functions on
 $\xi_{v_1}^{-1} ( X_{v_1} \bigcap X_{v_2})$ .  Assume $r$ to be small enough so that
$(h\cdot \psi(B(\widetilde{x},r))) \bigcap \psi( B(\widetilde{x},r))$ is empty for all $h$ in
 $G_{v_2}$.  Then $(\psi, id):( B(\widetilde{x},r), \{1\},\xi_{v_1}) \to
  (\widetilde{X}_{v_2}, G_{v_2}, \xi_{v_2}) $ is an embedding of orbifold charts.

 \begin{remark}\label{rembold}  We will denote the topological space $X$ endowed with
the above orbifold structure by ${\bf X}$. In general we denote the underlying space of
an orbifold ${\bf Y}$ by $Y$. We denote the set of smooth points of $Y$, i.e. points having
trivial local group, by $Y_{reg}$.
\end{remark}

\begin{remark}\label{remstd}
The equivariant homeomorphism or diffeomorphism type
  of a quasitoric orbifold does not depend on the choice of signs of the characteristic
   vectors. However these signs do effect the
 local complex structure on the coordinate charts obtained via the
  pullback of the standard complex
 structure on $\CC^2/G_v$. A theorem of Prill \cite{[Pri]} proves that the analytic
 germ of singularity at $v$ is characterized by the linearized action of $G_v$.
 \end{remark}

     The following lemma shows that the orbifold structure on ${\bf X}$ does not depend on the
      shape of the polytope $P$.
    % In manifold case this follows from the results of \cite{[Da]}, particularly Theorem 4.5,
   % and the fact that the linear model
   % determines the augmented normal system described in that article.

    \begin{lemma}\label{clas} Suppose ${\bf X}$ and ${\bf Y}$ are four dimensional quasitoric
    orbifolds whose orbit spaces
    $P$ and $Q$ are diffeomorphic and the characteristic vector of any edge of $P$
    matches with the characteristic vector of the corresponding edge of $Q$. Then ${\bf X}$
    and ${\bf Y}$ are equivariantly diffeomorphic.
    \end{lemma}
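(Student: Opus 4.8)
The plan is to produce the equivariant diffeomorphism ${\bf X}\to{\bf Y}$ directly from a polytope diffeomorphism $f:P\to Q$, and then to check that it is smooth one vertex chart at a time. First I would realize both orbifolds by models: write ${\bf X}=P\times T_N/\sim$ and ${\bf Y}=Q\times T_N/\sim$ with characteristic functions $\Lambda$ and $\Lambda'$, where $f$ is a diffeomorphism of manifolds with corners carrying each edge $E_i$ of $P$ onto the corresponding edge of $Q$ and $\Lambda'(i)=\Lambda(i)=:\lambda_i$. If $p$ lies in the relative interior of a face $F$ of $P$, then $f(p)$ lies in the relative interior of the corresponding face $f(F)$ of $Q$, and $N(f(F))$ is generated by the same set $\{\lambda_i:F\subset E_i\}$ as $N(F)$; hence $N(f(F))=N(F)$ and $T(N(f(F)))=T(N(F))$. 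Therefore $(p,t)\sim(q,s)$ in $P\times T_N$ exactly when $(f(p),t)\sim(f(q),s)$ in $Q\times T_N$, so $(p,t)\mapsto(f(p),t)$ descends to a homeomorphism $F:X\to Y$ which is manifestly $T_N$-equivariant and covers $f$. What remains is to upgrade $F$ to a diffeomorphism of orbifolds.

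For that I would argue over the cover $\{U_v\}$ of $P$, which gives the cover of ${\bf X}$ by the charts $(\widetilde X_v,G_v,\xi_v)$. Fix a vertex $v$ of $P$ with adjacent edges $E_1,E_2$, put $w:=f(v)$, and note that the edges of $Q$ at $w$ carry the same characteristic vectors $\lambda_1,\lambda_2$, so $N(w)=N(v)$ and $G_w=N/N(w)=N/N(v)=G_v$ as finite subgroups of $T_{N(v)}$. I would then define $\widetilde F_v:\widetilde X_v\to\widetilde Y_w$ on $\widetilde X_v=U_v\times T_{N(v)}/\sim_v$ by $[(p,t)]^{\sim_v}\mapsto[(f(p),t)]^{\sim_w}$; as before this is well defined, it is $G_v$-equivariant, and $\xi_w\circ\widetilde F_v=F\circ\xi_v$. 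Consequently, once each $\widetilde F_v$ is known to be a diffeomorphism, the pairs $(\widetilde F_v,\mathrm{id}_{G_v})$ are chart-level lifts of $F$ and exhibit $F$ as an isomorphism of orbifolds.

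The core step is the smoothness of $\widetilde F_v$, which I would establish by transporting it to $\RR^4$ via the canonical homeomorphisms $\phi(v),\phi(w)$ of \eqref{homeo}--\eqref{homeod}. Writing $z_j=x_j(v)+\sqrt{-1}\,y_j(v)$ one has $|z_j|^2=p_j(v)$, the $j$-th straightened edge coordinate near $v$, while $\widetilde F_v$ fixes the angular coordinates $q_j(v)$ and replaces $p_j(v)$ by $g_j:=p_j(w)\circ f$. Both $(p_1(v),p_2(v))$ and $(g_1,g_2)$ are diffeomorphisms from a neighborhood of $v$ in $U_v$ onto a neighborhood of $0$ in $\RR^2_{\ge}$ carrying $E_j$ to $\{s_j=0\}$, so their composite $h:=(g_1,g_2)\circ(p_1(v),p_2(v))^{-1}$ is a corner diffeomorphism near $0$ fixing $0$ and preserving both coordinate axes. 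By Hadamard's lemma $h_j(s_1,s_2)=s_j\,u_j(s_1,s_2)$ with $u_j$ smooth, and since $h$ preserves $\{s_j=0\}$ its normal derivative there does not vanish, so $u_j>0$ near $0$. Thus $\phi(w)\circ\widetilde F_v\circ\phi(v)^{-1}$ is the map $(z_1,z_2)\mapsto\big(z_1\sqrt{u_1(|z_1|^2,|z_2|^2)},\,z_2\sqrt{u_2(|z_1|^2,|z_2|^2)}\big)$, which is smooth because $|z_j|^2=x_j^2+y_j^2$ is smooth and $\sqrt{u_j}$ is smooth and positive; its inverse has the same form (built from $h^{-1}$), so it is a diffeomorphism; and it commutes with $G_v$ because that action merely multiplies the $z_j$ by roots of unity (cf. \eqref{action}) and leaves the $|z_j|^2$ fixed. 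Hence each $\widetilde F_v$ is an equivariant diffeomorphism and the compatible family $\{\widetilde F_v\}$ completes the argument.

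I expect the main obstacle to be exactly the Hadamard factorization together with the strict positivity $u_j>0$: this is what makes the square-root coordinates behave well and promotes $\widetilde F_v$ from a homeomorphism to a diffeomorphism, whereas the manipulations with the equivalence relations $\sim$ and $\sim_v$ are routine. A secondary point worth stating carefully is that the hypothesis ``$P$ and $Q$ diffeomorphic with matching characteristic vectors'' is to be read as the existence of a corner diffeomorphism $f$ realizing the given identification of edges, so that the local edge-straightening diffeomorphisms used above are genuinely at hand.
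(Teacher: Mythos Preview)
Your proposal is correct and follows essentially the same approach as the paper: both construct the chart-level lift $(\sqrt{p_i}\cos q_i,\sqrt{p_i}\sin q_i)\mapsto(\sqrt{f_i}\cos q_i,\sqrt{f_i}\sin q_i)$, reduce smoothness to smoothness of $\sqrt{f_i/p_i}$, factor $f_i=p_i\,u_i$ via Hadamard/Taylor using that $f$ preserves the edge $p_i=0$, and invoke nonsingularity of the Jacobian to get $u_i>0$. Your write-up is in fact more complete than the paper's, which omits the global construction of the $T_N$-equivariant homeomorphism, the verification that $G_w=G_v$, and the $G_v$-equivariance of the lift.
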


 \begin{proof} Pick any vertex $v$ of $P$.  For simplicity we will write $p_i$ for $p_i(v)$, and
$q_i$ for $q_i(v)$. Suppose the diffeomorphism
$f : P_1 \to P_2$ is given near $v$ by $f(p_1,p_2)= (f_1,f_2)$. It induces a map of
 local charts  $\widetilde{X}_v \to \widetilde{Y}_{f(v)} $ by
\begin{equation}\label{map}
(\sqrt{p_i} \cos(q_i), \sqrt{p_i} \sin(q_i)) \mapsto (\sqrt{f_i} \cos(q_i), \sqrt{f_i} \sin(q_i)) \quad
{\rm for}\; i= 1,2.
\end{equation}
This is a smooth map if the functions $\sqrt{f_i/p_i}$ are smooth functions of $p_1,p_2$. Without
loss of generality let us consider the case of $\sqrt{f_1/p_1}$. We may write
\begin{equation}\label{taylor}
f_1(p_1,p_2) = f_1(0,p_2) +  p_1 \frac{\partial f_1}{\partial p_1} (0,p_2) + p_1^2 g(p_1,p_2)
\end{equation}
where $g$ is smooth, see section 8.14 of \cite{[Dieu]}. Note that $f_1(0,p_2)=0$ as $f$ maps
the edge $p_1=0$ to the edge $f_1 =0$. Then it follows from equation \eqref{taylor} that
$f_1/p_1$ is smooth. We have
\begin{equation}\label{taylor2}
\frac{f_1}{p_1} =  \frac{\partial f_1}{\partial p_1} (0,p_2) + p_1 g(p_1,p_2)
\end{equation}
Note that $\frac{f_1}{p_1}$ is nonvanishing away from $p_1=0$.
Moreover we have
\begin{equation}
\frac{f_1}{p_1}=  \frac{\partial f_1}{\partial p_1} (0,p_2) \; \; {\rm when}\; p_1 =0.
\end{equation}
Since $f_1(0,p_2) $ is identically zero, $ \frac{\partial f_1}{\partial p_2} (0,p_2) =0 $.
As the Jacobian of $f$ is nonsingular we must have
\begin{equation}
\frac{\partial f_1}{\partial p_1} (0,p_2) \neq 0
\end{equation}
Thus  $\frac{f_1}{p_1}$ is nonvanishing even when $p_1=0$. Consequently
$\sqrt{f_1/p_1}$ is smooth.
Therefore the map \eqref{map} is smooth and induces an  isomorphism of orbifold charts.
\end{proof}

%\begin{remark}\label{rem2} If the characteristic data differ by an
% automorphism of $\ZZ^2$, then the quasitoric manifolds are equivariantly
%  diffeomorphic up to the corresponding automorphism of
%$T^2$. The converse is also true. This holds for all dimensions.
%\end{remark}

%\item  $\CP^2 + \CP^2$ is a quasitoric manifold with characteristic vectors
%$$(1,0),(1,-1),(0,1), (1,1)$$.
% It is not almost complex and hence not a toric variety.
%\end{itemize}

    \subsection{Torus action} An action of a group $H$ on an orbifold ${\bf Y}$ is an
    action of $H$ on the underlying space $Y$ with some extra conditions. In particular
    for every sufficiently small $H$-stable neighborhood $U$ in $Y$ with uniformizing
    system $(\widetilde{U},G,\xi)$, the action should lift to an action of $H$ on $\widetilde{U}$ that commutes
    with the action of $G$. The $T_N$-action on the underlying topological space of a
    quasitoric orbifold does not lift to an action on the orbifold in general.

    \subsection{Metric} Any cover of $X$ by $T_N$-stable open sets induces an
    open cover of $P$. Choose a smooth partition of unity on the polytope $P$ subordinate
    to this induced cover.
    Composing with  the projection map $\pi: X \to P$ we obtain a partition of
    unity on $X$ subordinate to the given cover, which is $T_N$-invariant. Such a
    partition of unity is smooth as the map
    $\pi$ is smooth, being locally given by maps $p_j = x_j^2 + y_j^2$.

    \begin{defn}
    By a torus invariant metric on ${\bf X}$ we will mean a metric on ${\bf X}$ which is
    $T_{N(v)}$-invariant in some neighborhood of each vertex $v$ and $T_N$-invariant on
     $X_{reg}$. \end{defn}

     For instance, choose a $T_{N(v)}$-invariant metric on each $\widetilde{X}_v$. Then
      using a partition of unity
     as above we can define a metric on ${\bf X}$. Such a metric is $T_N$-invariant on
     $X_{reg}$. We use variants of this construction in what follows.

    \subsection{Characteristic suborbifolds} The $T_N$-invariant subset $\pi^{-1}(E)$, where
    $E$ is any edge of $P$, is a suborbifold of ${\bf X}$. It is called a characteristic
     suborbifold. Topologically it is a sphere. It can have orbifold singularity only at the
     two vertices. If $\lambda$ is the characteristic vector attached to $E$, then
     $\pi^{-1}(E)$ is fixed by the circle subgroup $T(\lambda)$ of $T_N$.
      A characteristic suborbifold is a quasitoric orbifold, see \cite{[PS]}.

     \subsection{Orientation} Consider the manifold case first.
       Note that for any vertex $v$
      $dp_i(v) \wedge dq_i(v) = dx_i(v) \wedge dy_i(v)$. Therefore $\omega(v):=
      dp_1(v)\wedge dp_2(v) \wedge
      dq_1(v) \wedge dq_2(v)$ equals $ dx_1(v) \wedge dx_2(v) \wedge dy_1(v) \wedge dy_2(v) $.
       The standard coordinates $(p_1, p_2)$ are related to $(p_1(v), p_2(v))$
      by a diffeomorphism. Similarly for $(q_1,q_2)$ and $(q_1(v), q_2(v))$. Therefore
      $\omega:= dp_1 \wedge dp_2 \wedge dq_1 \wedge dq_2$ is a nonzero multiple of each
      $\omega(v)$ and defines a nonvanishing form on $X$.
       Thus a choice of orientations for $P \subset \RR^2$
      and $T_N$ induces an orientation for $X$.

       In the orbifold case
      the action of $G_v$ on $\widetilde{X}_v$, see equation \eqref{action},
        preserves $\omega(v)$ for each
      vertex $v$ as $dx_i(v) \wedge dy_i(v)= \frac{\sqrt{-1}}{2} dz_i(v)\wedge d{\bar z}_i(v)$.
       Hence the same conclusion holds.

  \subsection{Omniorientation}\label{omnio} An omniorientation is a choice of orientation for the
  orbifold as well as an orientation for each characteristic suborbifold.
  At any vertex $v$, the $G_v$-representation ${\mathcal T}_0 \widetilde{X}_v$ splits into the direct
  sum of two $G_v$-representations corresponding to the linear subspaces $z_i(v)=0$.
  Thus we have a decomposition of the orbifold tangent space
  ${\mathcal T}_v{\bf X}$ as a direct sum of tangent spaces of the two characteristic
   suborbifolds that meet at $v$.
   Given an omniorientation, we set the
  sign of a vertex $v$ to be positive if the orientations of
   ${\mathcal T}_v({\bf X})$ determined by the
  orientation of ${\bf X}$ and orientations of characteristic suborbifolds coincide. Otherwise
  we say that sign of $v$ is negative. An omniorientation is then said to be positive if
  each vertex has positive sign.

   Note that the normal bundle of any characteristic suborbifold is naturally oriented
   by the action of its isotropy circle. The action and hence the orientation depends on
   the sign of the characteristic vector. Thus
   for a fixed orientation on ${\bf X}$ an omniorientation
    is determined by a choice of sign for each
   characteristic vector. Assume henceforth that
   ${\bf X}$ is oriented via standard orientations on $P$
   and $T_N$.  Then an omniorientation on ${\bf X}$ is positive if
    the matrix of adjacent characteristic vectors, with clockwise
    ordering of adjacent edges,
    has positive determinant at each vertex.

\section{Almost complex structure}\label{thmacs}

 Kustarev \cite{[Kus]} showed that the obstruction to existence of a torus invariant
 almost complex structure on a quasitoric manifold, which is furthermore orthogonal
 with respect to a torus invariant metric, reduces to the obstruction to its existence
 on a section of the orbit map. We use the same principle here for $4$-dimensional
  quasitoric orbifolds. The obstruction theory for orbifolds in higher dimensions
   seems to be more complicated.

  Let ${\bf X}$ be a positively omnioriented $4$-dimensional primitive quasitoric orbifold.

 \begin{defn} We say that an almost complex structure on ${\bf X}$ is
  torus invariant if it is $T_{N(v)}$-invariant in some neighborhood of each vertex $v$
  and $T_N$-invariant on $X_{reg}$.
 \end{defn}

  Denote  the set of all $i$-dimensional faces of $P$ by $sk_{i}(P)$.
  We refer to $\pi^{-1}(sk_{i}(P))$ as the $i$-th skeleton of $X$.
   We fix an embedding
  $\iota : P \longrightarrow X$ that satisfies
   \begin{equation}\label{iota}
  \begin{array}{l}
        \pi\circ \iota = id  \quad {\rm and}\\
     \iota|_{int(G)} \; {\rm is\; smooth \; for\;    any \;    face \; }  G \subset  P.
   \end{array}
   \end{equation}
   A choice of $\iota$ is given by the composition
   $P \stackrel{i}{\rightarrow} P\times T_N \stackrel{j}{\rightarrow} X $ where $i$ is the inclusion
   given by $i(p_1,p_2) = (p_1, p_2, 1,1)$ and $j$ is the quotient map that defines $X$.

 %  An example of $\iota$ is given by the composition $P\longrightarrow P\times T^{n}\longrightarrow X$
  %   where the last map is the factorization map \cite{[BP]}.

    We also fix a torus invariant metric $\mu$ on ${\bf X}$ as follows. Choose an open cover of $P$
 such that each vertex of $P$ has a neighborhood which
 is contained in exactly one open set of the
 cover.
 This induces a cover of $X$. On each
 open set $W_v$  of this cover, corresponding to the vertex $v$,
 choose the standard metric with respect to the coordinates in \eqref{homeo}.
 On the remaining open sets, choose any $T_N$-invariant metric.
    Then use a $T_N$-invariant partition of unity, subordinate to the cover,
to glue these metrics and obtain $\mu$.

 \begin{theorem}\label{acs} There exists a torus invariant almost complex structure
  $J$ on ${\bf X}$ such that $J$ is orthogonal
with respect to $\mu$.
    \end{theorem}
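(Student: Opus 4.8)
The plan is to follow Kustarev's obstruction-theoretic strategy and reduce the construction of a torus-invariant, $\mu$-orthogonal almost complex structure on $\mathbf X$ to the construction of a compatible complex structure along the section $\iota(P)$, extending it equivariantly off the section. First I would observe that near each vertex $v$ the chart $\widetilde X_v$ is an open ball in $\CC^2$ with its standard complex structure, which is $T_{N(v)}$-invariant and orthogonal for the standard metric chosen on $W_v$; moreover the $G_v$-action \eqref{action} is by diagonal unitary matrices, so this standard $J$ descends to a $T_{N(v)}$-invariant orthogonal almost complex structure on a neighborhood of $v$ in $\mathbf X$. This fixes $J$ near the $0$-skeleton. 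The task is then to extend over the $1$-skeleton and the interior of $P$ while staying $T_N$-invariant on $X_{reg}$ and orthogonal for $\mu$.

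Next I would set up the reduction to the section. On $X_{reg}$, the orbit map $\pi$ restricted to $\pi^{-1}(\mathrm{int}\,P)$ is a principal $T_N$-bundle (trivialized by $\iota$), and over a neighborhood of the relative interior of an edge it is a principal $T_N/T(\lambda)$-bundle times a disc normal direction. The key point, exactly as in \cite{[Kus]}, is that a $T_N$-invariant orthogonal almost complex structure on $\pi^{-1}(U)$ for $U$ an open set of $P$ is equivalent to a section of a bundle over $U$ whose fiber is the space of orthogonal complex structures compatible with the induced metric and with the splitting into ``base'' (tangent to $P$) and ``fiber'' (tangent to $T_N$) directions — and this fiber is contractible (it is essentially a point once one demands compatibility with the canonical orientation of the torus fiber and the orientation of $P$, or more precisely a convex/contractible space of admissible rotations). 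I would make precise that the relevant bundle of admissible complex structures over $P$, rel the already-chosen germ on a neighborhood of $sk_0(P)$, has contractible fibers, so sections extend from the closed set $sk_0(P)$-neighborhood over all of the $2$-disc $P$ with no obstruction. Pulling back such a section along $\iota$ and spreading it out by the $T_N$-action (and by the $T_{N(v)}$-action near vertices) yields the desired $J$ on all of $\mathbf X$; invariance and orthogonality are built into the construction, and the gluing is consistent because near vertices the extended $J$ agrees with the standard one we started with.

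The main technical steps, in order, are: (i) verify the standard $J$ near each vertex is $G_v$-invariant, $T_{N(v)}$-invariant, orthogonal for the vertex metric, and positively oriented — here positivity of the omniorientation is what guarantees the two characteristic-direction summands can be simultaneously matched to the standard $\CC$-lines with the correct orientation; (ii) describe the bundle $\mathcal J \to P$ of $T_N$-invariant orthogonal almost complex structures adapted to the metric $\mu$ and the tangent/fiber splitting, and identify its fiber over interior points, over edge-interior points, and over vertices; (iii) prove each fiber is nonempty and contractible, and that the germ already constructed near $sk_0(P)$ defines a section over a closed neighborhood of $sk_0(P)$; (iv) extend the section over $P$ using contractibility of the fiber and the fact that $P$ deformation retracts appropriately onto that neighborhood (or invoke the Tietze-type extension for sections of bundles with contractible fiber over a CW pair); (v) transport back to $\mathbf X$ and check that the resulting tensor is a genuine orbifold almost complex structure, i.e. the local lifts are smooth and $G_v$-equivariant, using the chart compatibility \eqref{trans} established in Section~\ref{smooth}.

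The hard part will be step (iii), and more precisely pinning down the fiber of $\mathcal J$ over the \emph{edge} points and over the \emph{vertices} so that contractibility holds while still matching the pre-assigned germ near the singular points: over an edge $E_i$ the structure must preserve the tangent space to the characteristic suborbifold $\pi^{-1}(E_i)$ and rotate the normal circle direction in the way dictated by the sign of $\lambda_i$ (this is where the omniorientation data enters globally), and near a vertex the two such conditions from the two incident edges must be compatible — positivity of the omniorientation is exactly the hypothesis that makes this compatible and the space of choices convex. I would also need to be careful that the metric $\mu$, being glued from the standard vertex metrics and arbitrary $T_N$-invariant metrics via a partition of unity, still makes the space of adapted orthogonal complex structures over each point a contractible (indeed, affine or convex-like) set; this follows because the orthogonality condition together with the orientation and splitting constraints cuts out a contractible subset of $SO$ of the tangent space at each point, uniformly in the point. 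Once these fiberwise facts are in place, the obstruction-theoretic extension is formal, and equivariance and orthogonality are automatic from the construction.
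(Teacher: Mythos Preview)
Your overall architecture---build $J$ along the section $\iota(P)$ and spread by the torus action, starting from the standard $J_v$ near vertices---is the same as the paper's. The gap is in your step (iii): the fiber of admissible orthogonal almost complex structures over an \emph{interior} point of $P$ is not contractible. With no extra constraint it is $SO(4)/U(2)\cong S^2$. Your attempt to cut it down by requiring ``compatibility with the base/fiber splitting'' (tangent to $P$ versus tangent to $T_N$) does not help: the standard $J_v$ you fixed near a vertex does \emph{not} preserve that splitting. In the polar-type coordinates $z_j=\sqrt{p_j}\,e^{2\pi i q_j}$ one has $J(\partial_{p_j})\propto \partial_{q_j}$, so $J_v$ swaps the $P$- and $T_N$-directions rather than respecting them as separate summands. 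Hence imposing that constraint over the interior would force a $J$ that cannot match your vertex germ.

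The paper avoids this in two moves. First, along each edge $E$ it does not use the $P$/$T_N$ splitting at all; it uses the $\mu$-orthogonal decomposition $\mathcal T\mathbf S^2\oplus\nu$ into tangent and normal of the characteristic suborbifold $\mathbf S^2=\pi^{-1}(E)$ (orthogonality is checked using the $T(\lambda)$-action). On each $2$-plane summand the oriented orthogonal complex structure is unique since $SO(2)/U(1)$ is a point, and positive omniorientation is exactly what makes this unique $J$ agree with $J_u$ and $J_v$ at the two endpoints. Second, over the interior the paper does \emph{not} need contractibility: it chooses a loop $\gamma\subset P$ just inside the boundary, notes $J$ is already defined on $\iota(\gamma)$, trivializes $\mathcal T X_0|_{\iota(D)}$, and extends over the disc using only that $SO(4)/U(2)$ is \emph{simply connected}. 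That is the correct obstruction for extending a section from $S^1$ to $D^2$, and it replaces your contractibility claim. If you rewrite your step (iii) with the tangent/normal splitting on edges and with $\pi_1(SO(4)/U(2))=0$ over the interior, your outline becomes the paper's proof.
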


  \begin{proof}  Choose small  orbifold charts  $(\widetilde{X}^{\prime}_v, G_v, \xi_v)$ around
   each vertex $v$ where ${X}^{\prime}_v \subset W_v$.
    Choose coordinates $x_i(v), \,y_i(v),\, i=1,2$ on $\widetilde{X}^{\prime}_v$
     according to \eqref{homeo}. Declare $z_i(v) = x_i(v) + \sqrt{-1} y_i(v) $.
     Choose the standard complex structure
$J_v$ on $\widetilde{X}^{\prime}_v$ with respect to these coordinates,
i.e. $z_1(v), z_2(v)$ are holomorphic coordinates
under $J_v$. Since  $J_v$ commutes with action of $T_{N(v)}$
and $G_v$ is a subgroup of $T_{N(v)}$, we may regard
$J_v$ as a torus invariant complex structure on a neighborhood of $v$
 in the orbifold ${\bf X}$. Note that
these local complex structures are orthogonal with respect to $\mu$ near the
vertices.

 We will first construct an almost complex structure on the first skeleton
 of $X$ that agrees with the above local complex structures.
  Let $E$ be any edge of $P$. Suppose $E$ joins the vertices $u$ and $v$.
 Assume that $\lambda$ is the characteristic vector attached to $E$. The characteristic
 suborbifold corresponding to $E$ is an orbifold sphere which we denote by ${\bf S}^2$.
 Let $\nu$ denote the orbifold normal bundle to ${\bf S}^2$ in ${\bf X}$. This is an
 orbifold line bundle with action of $T(\lambda)$, see definition \ref{tlambda}.

 Take any $x \in {\bf S}^2$.
 Observe, after lemma 3.1 of \cite{[Kus]}, that $\nu_x$ and $\mathcal{T}_x{\bf S}^2$
   are orthogonal with respect to $\mu$. This may be verified as follows. Take any
   nonzero vectors $\eta_1 \in \nu_x$ and $\eta_2 \in \mathcal{T}_x {\bf S}^2$.
   Let $\theta$ be an element of $T(\lambda)$ that acts on $\nu_x$ as multiplication by
   $-1$. Since $T(\lambda)$ acts trivially on $\mathcal{T}_x {\bf S}^2$, we have
   $$\mu(\eta_1, \eta_2)= \mu(\theta \cdot \eta_1, \eta_2 ) = \mu (-\eta_1, \eta_2) = 0. $$
  Thus we may split the construction of an orthogonal $J$ into
 constructions of orthogonal
 almost complex structures on $\nu$ and $\mathcal{T}{\bf S}^2$.

Define the restriction of $J$ to $\nu$ as rotation by the angle
$\frac{\pi}{2}$ with respect to
 the metric $\mu$ in the counterclockwise direction as specified by the orientation
 of $\nu$ obtained from the $T(\lambda)$ action. Then $J|_{\nu}$ is torus invariant as
$\mu$ is preserved by torus action.

% Equivalently: Define the restriction of $J$ to $\nu$ to be the rotation action of
% $T(\lambda)$ by the angle $\frac{\pi}{2}$ in the counterclockwise
% direction in accordance with the orientation on $\nu$ corresponding to
% the given omniorientation. Then $J|_{\nu}$
% is torus invariant. It is also orthogonal as $\mu$ is preserved
% by torus action.

 Recall that the space of all
 orthogonal complex structures on the oriented vector space $\RR^{2}$ is parametrized by
 $SO(2)/U(1)$ which is a single point. We orient $\mathcal{T}{\bf S}^2$ according to the
  given omniorientation. Consider the path $\iota(E) \in {\bf S}^2$ given
by the embedding $\iota$ in \eqref{iota}. Since this path is
contractible, the restriction of $\mathcal{T}{\bf S}^2$ on it is
trivial. Thus there is a canonical choice of an orthogonal almost
complex structure on $\mathcal{T}{\bf S}^2|_{\iota(E)}$. We want
this structure to agree with the complex structures
 $J_u|_{\mathcal{T}{\bf S}^2}$ and $J_v|_{\mathcal{T}{\bf S}^2}$
near the vertices, chosen earlier. This is possible since the
omniorientation is positive. Then we use the torus action to
define $J|_{\mathcal{T}{\bf S}^2}$ on each point $x$ in
$\bf{S}^2$. Find $y$ in $\iota(E)$ and $\theta\in T_N/T(\lambda)$
such that $x = \theta \cdot y$. Then define
  $J(x) = d\theta \circ J(y) \circ d\theta^{-1} $.  This completes the construction
of a torus invariant orthogonal $J$ on the $1$-skeleton of ${\bf X}$.

 Choose a simple loop $\gamma$ in
$P$ that goes along the edges for the most part but avoids the vertices.
By the previous step of our construction, $J$ is given on $\iota(\gamma)$.
Let $D$ be the disk in $P$ bounded by $\gamma$. The set
$X_0 := \pi^{-1}(D) \subset X$ is a smooth manifold with boundary.
The restriction of
$\mathcal{T}X_0 $ to $\iota(D)$ is a trivial vector bundle.
 Fix a trivialization.
Recall that the space of all orthogonal complex structures on  oriented vector
 space $\RR^{4}$, up to isomorphism, is homeomorphic to $SO(4)/U(2)$. This
is a simply connected space. Thus $J$ may be extended from $\iota({\gamma})$
to $\iota(D)$. Then we produce a $T_N$-invariant  orthogonal $J$ on $\mathcal{T}X_0 $
by using the $T_N$ action as in the last paragraph. This completes the proof.
 \end{proof}

 % Let $\eta$ be any vector in $\mathcal{T}_x{\bf S}^2$. For any torus invariant almost complex
 % structure $J$, and any element $\theta \in T(\lambda)$, we have
 % $$ d\theta (J\eta) = J(d\theta \eta) = J \eta .$$ This implies that $J \eta \in \mathcal{T}_x{\bf S}^2$.

\section{Blowdown}\label{blow}

  Our  blowdown is  analogous to partial resolution of singularity in complex geometry.
Topologically it is an inverse
 to the operation of connect sum with a complex $2$-dimensional
 weighted projective space. At the combinatorial level,
  it corresponds to deletion of
 an edge and its characteristic vector. To be precise, the polytope is modified by removing one edge
  and extending its neighboring edges till they meet at a new vertex, using lemma
  \ref{clas} if necessary.

   Suppose the orbifold ${\bf X}$ corresponds to the model
 $(P,\Lambda)$. Suppose the edge $E_2$ of $P$ is deleted and
 the neighboring edges $E_1$ and $E_3$ are extended
 produce a new polytope $\widehat{P}$. Denote the characteristic vector
 attached to the edge $E_i$ by $\lambda_i$. Assume that $\lambda_1$ and
$\lambda_3$ are linearly independent. Then $\widehat{P}$ inherits a
 characteristic function $\widehat{\Lambda}$ from $\Lambda$. Let ${\bf Y}$
 be the orbifold corresponding to the model $(\widehat{P}, \widehat{\Lambda})$.
We only consider the case where at least one of the vertices of $E_2$ correspond
to a smooth point of ${\bf X}$.
 Then there exists a continuous $T_N$-invariant map of underlying
 topological spaces $\rho: X \to Y$ called a blowdown. This map contracts
the sphere $\pi^{-1}(E_2) \subset X$ to a point in the image and it is a
diffeomorphism away from this sphere. The sphere $\pi^{-1}(E_2)$ is
called the exceptional set.

 The blowdown
 is not an orbifold morphism near the exceptional set as it cannot
 be lifted locally to a continuous equivariant map on orbifold charts. This is
 not surprising as it
 also happens  for resolution of quotient singularities
  in algebraic geometry.  However we can give a neighborhood of the exceptional
  set an analytic structure
  such that the blowdown is analytic in this neighborhood.
  We can extend these local complex structures on ${\bf X}$ and
   ${\bf Y}$ to almost complex structures so that the blowdown map is an almost
   complex diffeomorphism of orbifolds away from the exceptional set. Moreover the
 blowdown is a $J$-holomorphic map in a natural sense described in corollary \ref{smbd}.

 \begin{theorem}\label{bldn} If $\det [\lambda_1,\lambda_2] =1$ and  $0 < \det [\lambda_2,\lambda_3]
  \le \det [\lambda_1,\lambda_3]$, then there exists a continuous map $\rho:  X \to Y$
   which is a diffeomorphism away from
 the set $\pi^{-1}(E_2) \subset X$.
  \end{theorem}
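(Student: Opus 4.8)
The plan is to realize $\rho$ as the quasitoric analogue of the toric birational morphism that contracts the divisor of the ray $\lambda_2$, described, in the style of Section~\ref{smooth}, by a small collection of coordinate charts. First I would normalize: since $\det[\lambda_1,\lambda_2]=1$ the vectors $\lambda_1,\lambda_2$ form a $\ZZ$-basis of $N$, so after a change of basis we may take $\lambda_1=(1,0)$ and $\lambda_2=(0,1)$; then, writing $\lambda_3=(\ell_1,\ell_2)$, primitivity of $\lambda_3$ together with $0<\det[\lambda_2,\lambda_3]\le\det[\lambda_1,\lambda_3]$ forces $\lambda_3=(-k,d)$ with $1\le k\le d$ and $\gcd(k,d)=1$. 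In particular $v_1$ is a smooth point of $X$ (so $\widetilde X_{v_1}=X_{v_1}$), while the local groups at $v_2$ and at the new vertex $w$ of $\widehat P$ are $G_{v_2}\cong\ZZ/k$ and $G_w\cong\ZZ/d$. Invoking Lemma~\ref{clas}, I would fix the shapes of $P$ and $\widehat P$ so that they coincide outside small neighbourhoods of $E_2$, respectively of $w$, and so that $Y_w\cong\widetilde Y_w/G_w\cong\CC^2/G_w$ with $G_w$ acting by $(z_1,z_2)\mapsto(e^{2\pi\sqrt{-1}\,k/d}z_1,\ e^{2\pi\sqrt{-1}/d}z_2)$.

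I would then define $\rho$ piecewise. Over the common region $P\setminus(\text{nbhd of }E_2)=\widehat P\setminus(\text{nbhd of }w)$ the quotient constructions of $X$ and $Y$ agree edge by edge, so there $\rho$ is the identity. Over a neighbourhood of $\pi^{-1}(E_2)$, which is covered by the two charts $X_{v_1}$ and $X_{v_2}$, I would set, with $\xi_w\colon\widetilde Y_w\to Y_w$ the covering map,
\[
(z_1(v_1),z_2(v_1))\ \longmapsto\ \xi_w\bigl(z_1(v_1)\,z_2(v_1)^{k/d},\ z_2(v_1)^{1/d}\bigr)\quad\text{on }\widetilde X_{v_1}=X_{v_1},
\]
\[
(z_1(v_2),z_2(v_2))\ \longmapsto\ \xi_w\bigl(z_1(v_2)^{k/d},\ z_1(v_2)^{1/d}\,z_2(v_2)\bigr)\quad\text{on }\widetilde X_{v_2}.
\]
These formulas are essentially forced: they are what one obtains by writing out, via \eqref{homeo} and \eqref{cocoor}, the requirement that $\rho$ preserve the angular coordinates of $T_N$, combined with the chosen shapes; and they are patently continuous wherever a branch of the fractional powers can be chosen.

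The bulk of the work is then a sequence of verifications, all carried out in these coordinates. (i) \emph{Descent to the coarse space.} Passing to another branch of $z^{1/d}$ multiplies it by some $\zeta$ with $\zeta^d=1$ and multiplies $z^{k/d}$ by $\zeta^k$, which is exactly the action of $\zeta\in G_w$; hence $\rho$ is branch-independent as a map into $Y_w=\widetilde Y_w/G_w$. The $\widetilde X_{v_2}$-formula is moreover invariant under $G_{v_2}\cong\ZZ/k$, whose generator acts by $(z_1,z_2)\mapsto(e^{2\pi\sqrt{-1}\,d/k}z_1,\ e^{-2\pi\sqrt{-1}/k}z_2)$ — invariance follows on choosing branches and using $(e^{2\pi\sqrt{-1}/k})^k=1$ — so it descends from $\widetilde X_{v_2}$ to $X_{v_2}=\widetilde X_{v_2}/G_{v_2}$. (ii) \emph{Continuity and gluing.} Both formulas, and the identity map over the common region, restrict to the identity on the dense torus orbit; being continuous they therefore agree on the overlaps $X_{v_1}\cap X_{v_2}$ and with the identity part, and thus glue to a continuous map $\rho\colon X\to Y$ (the agreement on $X_{v_1}\cap X_{v_2}$ can alternatively be read off from the transition functions \eqref{trans}). (iii) \emph{$T_N$-invariance} is immediate, $\rho$ being the identity in the angular directions. (iv) \emph{Contraction.} On $\widetilde X_{v_1}$ the locus $\{z_2(v_1)=0\}$ and on $\widetilde X_{v_2}$ the locus $\{z_1(v_2)=0\}$, which together make up $\pi^{-1}(E_2)$, both map to $\xi_w(0,0)=w$. (v) \emph{Diffeomorphism off $\pi^{-1}(E_2)$.} Over the common region this is clear; near the exceptional set the map $(z_1,z_2)\mapsto(z_1 z_2^{k/d},z_2^{1/d})$ restricted to $\{z_2\ne0\}$ has non-vanishing holomorphic Jacobian determinant $\tfrac1d z_2^{(k+1)/d-1}$ and the $G_w$-invariant smooth inverse $(w_1,w_2)\mapsto(w_1 w_2^{-k},w_2^d)$, so it induces a diffeomorphism onto an open subset of $Y_{reg}$; the same holds on $\widetilde X_{v_2}$. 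Together with (ii) and surjectivity of the induced map of orbit spaces $P\to\widehat P$, this shows that $\rho$ restricts to a diffeomorphism $X\setminus\pi^{-1}(E_2)\to Y\setminus\{w\}$.

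The step I expect to be the real obstacle is (i). Unlike in the toric category, the blowdown formulas near the exceptional set are genuinely multivalued on the orbifold charts — this is precisely why $\rho$ cannot be an orbifold morphism there — so the substance of the proof is to check that this ambiguity is absorbed exactly by $G_w$ and that, at $v_2$, the formula is simultaneously $G_{v_2}$-invariant, i.e. that $\rho$ descends to a single-valued continuous map into the underlying space of ${\bf Y}$. Once this is secured, and once the two polytopes have been put in compatible shape via Lemma~\ref{clas}, the remaining points (ii)–(v) are routine coordinate bookkeeping.
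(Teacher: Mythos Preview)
There is a genuine gap in step (ii). Your toric formula on $X_{v_1}$, rewritten at the polytope level via \eqref{homeo}, sends $(p_1,p_2,q_1,q_2)$ to $(p_1\widehat p^{\,k/d},\ \widehat p^{\,1/d},\ q_1,q_2)$ in the $(r_1,r_2,q_1,q_2)$-coordinates on $\widetilde Y_w$, where $\widehat p$ is the affine function cutting out $E_2$. This preserves the angular coordinates but is \emph{not} the identity on the dense torus orbit: it moves the base point in the polytope, and in particular $r_2=\widehat p^{\,1/d}$ agrees with $p_2$ on no open set. Consequently the two chart formulas do not glue with the identity map you prescribe on the ``common region''. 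Invoking Lemma~\ref{clas} to adjust the shapes of $P$ and $\widehat P$ does not repair this, since once the edges are fixed the radial coordinates $p_i(v)$ are determined by \eqref{homeod}; no choice of polytope makes $\widehat p^{\,1/d}=p_2$ identically.

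The paper addresses exactly this point. Rather than using the bare toric formula, it defines $\rho$ at the polytope level by
\[
(p_1,p_2,q_1,q_2)\ \longmapsto\ \bigl(\delta(\widehat p)^{k}\,p_1,\ \delta(\widehat p)\,p_2,\ q_1,q_2\bigr),
\]
where $\delta:[0,\infty)\to\RR$ is a non-decreasing cutoff with $\delta(t)=t^{1/d}$ for $t$ small and $\delta(t)\equiv 1$ for $t$ large. Thus $\rho$ is literally the identity once one leaves a tube around $E_2$, and the gluing is automatic; the remaining work is to write this map in the chart coordinates $z_i(v_j)$, $z_i(w)$ and verify directly that it and its inverse are smooth off $\pi^{-1}(E_2)$ (using that $\delta(\widehat p)$ and $\widehat p$ are smooth and nonvanishing there, and that the polytope map has nonsingular Jacobian). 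Your descent argument in (i) is correct, but it is this interpolation --- not the branch ambiguity --- that is the crux of the construction.
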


  \begin{proof}
  Let $v_1$ and $v_2$ be the vertices of $P$ corresponding to $E_1 \bigcap E_2$
  and $E_2 \bigcap E_3$ respectively. Let $w$ be the vertex of $\widehat{P}$ where
  the extended edges $E_1$ and $E_3$ meet.

    Up to change of basis of $N$ we may assume that
   $\lambda_1=(1,0)$, $ \lambda_2=(0,1)$, and  $ \lambda_3=(-k,m)$
   where $0 < k \le m$ are positive integers.
   Then $v_{1}$ is a smooth point in ${\bf X}$, but $v_2$ is a
  possibly singular point  in ${\bf X}$ with local group
    $\ZZ_k$. The point $w$ in ${\bf Y}$ has
    local group $\ZZ_m$.

    Up to choice of coordinates, the equations
  of the sides $E_1$ and $E_3$ may be assumed to be $p_1= 0$ and $p_2 =0$ respectively.
  Suppose the equation of $E_2$ is $\widehat{p} :=p_2 + s p_1 - t = 0$
   where $s $ and $t$ are  positive constants.

   Choose small positive numbers $\epsilon_1 < \epsilon_2< 1$ and a
   non-decreasing function $\delta: [0,\infty) \to \RR$ which is smooth
   away from $0$ such that

   \begin{equation}
   \delta(t)= \left\{ \begin{array}{ll} t^{\frac{1}{m}} & {\rm if} \, t < \epsilon_1 \\
   1 & {\rm if} \, t > \epsilon_2 \end{array} \right.
   \end{equation}

   The  blow down map $\rho: (P\times T_N/\sim) \to (\widehat{P} \times T_N/\sim) $
    may be defined by
   \begin{equation}
   \rho(p_1,p_2, q_1,q_2) = (\delta(\widehat{p})^{k} p_1, \delta(\widehat{p}) p_2, q_1,q_2)
   \end{equation}

It is enough to study the map $\rho$ in the open sets $X_{v_1}$  and $ X_{v_2}$, as it is
identity elsewhere.
The coordinates on  $\widetilde{X}_{v_1}$,  $\widetilde{ X}_{v_2}$ and
$\widetilde{Y}_{w}$ are
\begin{equation}\label{coords}
  \begin{array}{ll}
       z_1(v_1) := \sqrt{p_1} e^{2\pi \sqrt{-1} (q_1)}, &
    z_2(v_1) := \sqrt{\widehat{p}} e^{2\pi \sqrt{-1} q_2} \\
   \\
   z_1(v_2) := \sqrt{\widehat{p}}  e^{2\pi \sqrt{-1} \frac{mq_1+kq_2}{k}}, &
   z_2 (v_2)  := \sqrt{p_2} e^{2\pi \sqrt{-1} ( \frac{-q_1}{k})}\\
    \\
    z_1(w) :=  \sqrt{r_1}    e^{2\pi \sqrt{-1}\frac{mq_1+kq_2}{m}}, &
    z_2(w) :=\sqrt {r_2}  e^{2\pi \sqrt{-1}\frac{q_2}{m}}.
   \end{array}
   \end{equation}
For questions related to smoothness, it is convenient to describe (lift of) $\rho$ in terms of these coordinates.
 The formulas that follow make sense on suitable (small) open sets in the complement of the
exceptional sphere or its image
(and with choice of branch of roots), although
we may sometimes neglect to explicitly say so for the sake of brevity.

  The restriction   $\rho: X_{v_1} \to Y_w $ is given by
   \begin{equation}\label{eqbldn3}  \begin{array}{ll}
   z_1(w) = z_1(v_1) z_2(v_1)^{\frac{k}{m}} \sqrt{\frac{\delta(\widehat{p})^{k}}{\widehat{p}^{\frac{k}{m}}}},
   & z_2(w) = z_2(v_1)^{\frac{1}{m}} \sqrt{\frac{\delta(\widehat{p})p_2 }{\widehat{p}^{\frac{1}{m}}} }.
   \end{array} \end{equation}
The function $p_2$  is nonzero on $X_{v_1}$ whereas the functions  $\delta(\widehat{p})$ and $\widehat{p}$
 and $z_2(v_1)$ are smooth and nonvanishing
away from $\widehat{p}=0$. Hence $\rho$ is a smooth map on $X_{v_1} \bigcap \pi^{-1}(E_2)^c$.
   To show that $\rho$ is  a diffeomorphism away from the exceptional set we exhibit a suitable inverse
map $\rho^{-1}$.
 The map $\rho^{-1}: \rho(X_{v_1}) \bigcap \{(0,0)\}^c \to X_{v_1}$ is given by
\begin{equation} \begin{array}{lll}
   z_1(v_1) = \frac{z_1(w)}{z_2(w)^{k}} \sqrt{\frac{r_2^{k}}{(\delta(\widehat{p}))^{k}}},
   & \quad & z_2(v_1) = z_2(w)^{m} \sqrt{\frac{\widehat{p}}{r_2^{m}}}.
   \end{array}
   \end{equation}
Note that $r_2$ is a smooth function of $x_2(w)$ and $y_2(w)$. Moreover
   $r_2 = p_2 \delta(\widehat{p})$ does not vanish on $\rho(X_{v_1})-\{(0,0)\}$.
   Hence $z_2(w)$ does not vanish on this set either. Same holds for the functions
   $\widehat{p}$ and $\delta(\widehat{p})$.
 It only remains to show that $\widehat{p}$ is a smooth function
   of $x_i(w)$s and $y_j(w)$s.

   The map $f: P \to \widehat{P}$ given by $(r_1, r_2) =
   (\delta(\widehat{p})^k p_1, \delta(\widehat{p}) p_2)$ has Jacobian
   \begin{equation}
   J(f) = \left[ \begin{array}{ll}
           \delta(\widehat{p})^k +
           s k p_1 \delta(\widehat{p})^{k-1} \delta^{\prime}(\widehat{p})
          & k p_1 \delta(\widehat{p})^{k-1}  \delta^{\prime}(\widehat{p}) \\
            & \\
           s p_2 \delta^{\prime}(\widehat{p})
          & \delta(\widehat{p}) +  p_2 \delta^{\prime}(\widehat{p})
          \end{array} \right]
   \end{equation}

   Since $\det(J(f)) = \delta(\widehat{p})^{k+1} +
    \delta(\widehat{p})^k \delta^{\prime}(\widehat{p})(p_2 + s k p_1) > 0 $,
   $f$ is a diffeomorphism away from $\widehat{p} = 0$.
   Thus away from $(r_1, r_2) = (0,0)$, $p_1$ and $p_2$ are smooth
   functions of $(r_1, r_2)$. Hence $\widehat{p}= p_2 + s p_1 -t$ is also a smooth
   function of $r_1$ and $r_2$ away from the origin. Consequently
   $\widehat{p}$ is a smooth function of $x_i(w)$s and $y_j(w)$s
   away from the origin.

     Similarly the blowdown map  restriction   $\rho: X_{v_2} \to Y_w $ given by
\begin{equation}\label{eqbldn4}  \begin{array}{ll}
   z_1(w) = z_1(v_2)^{\frac{k}{m}}\sqrt{\frac{(\delta(\widehat{p}))^{k}p_1 }{\widehat{p}^{\frac{k}{m}}}},
   & z_2(w) = z_1(v_2)^{\frac{1}{m}}z_2(v_2) \sqrt{\frac{\delta(\widehat{p})}{\widehat{p}^{\frac{1}{m}}}},
   \end{array} \end{equation}
   is smooth on $X_{v_2} \bigcap \pi^{-1}(E_2)^c $ as the functions $p_1$, $\widehat{p}$ and $z_1(v_2)$ are
   nonvanishing there.

The map $\rho^{-1}: \rho(X_{v_2}) \bigcap \{(0,0)\}^c \to X_{v_2}$ is given by
\begin{equation} \begin{array}{lll}
   z_1(v_2) = \frac{z_2(w)}{z_1(w)^{1/k}} \sqrt{\frac{r_1^{\frac{1}{k}}}{\delta(\widehat{p})}},
   & \quad & z_1(v_2) = z_1(w)^{\frac{m}{k}} \sqrt{\frac{\widehat{p}}{r_1^{\frac{m}{k}}}}
   \end{array} \end{equation}
The diffeomorphism argument is same as in the case of the vertex
$v_1$. This completes the proof.  \end{proof}

 \begin{theorem}\label{holo}
 Suppose ${\bf X}$ and ${\bf Y}$ are positively omnioriented quasitoric orbifolds and
 $\rho: X \to Y$ a blowdown map as constructed above. Then we may choose torus invariant
 almost complex structures $J_1$ on ${\bf X}$ and $J_2$ on ${\bf Y}$ with respect to
 which $\rho$ is analytic near the exceptional set and almost complex orbifold morphism
  away from the exceptional set.
 \end{theorem}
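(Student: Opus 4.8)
The plan is to build the almost complex structures $J_1$ and $J_2$ in a compatible, layered fashion: first fix the standard complex structures near the relevant vertices so that the blowdown formulas from Theorem \ref{bldn} are manifestly analytic there, then extend these over the exceptional sphere and finally to all of ${\bf X}$ and ${\bf Y}$ using the method of Theorem \ref{acs}. Concretely, I would choose torus invariant metrics $\mu_1$ on ${\bf X}$ and $\mu_2$ on ${\bf Y}$ that coincide with the standard (flat) metrics in the coordinate charts $(\widetilde X_{v_1}, G_{v_1})$, $(\widetilde X_{v_2}, G_{v_2})$ near $v_1, v_2$ and $(\widetilde Y_w, G_w)$ near $w$, with respect to the coordinates $z_i(v_1)$, $z_i(v_2)$, $z_i(w)$ in \eqref{coords}. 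On these charts declare $J_1$ (resp. $J_2$) to be the standard complex structure in which the $z_i(\cdot)$ are holomorphic coordinates; as in the proof of Theorem \ref{acs}, since these structures commute with the $T_{N(v_i)}$- (resp. $T_{N(w)}$-) action and $G_{v_i}$ (resp. $G_w$) is a subgroup, they descend to torus invariant complex structures on the relevant orbifold neighborhoods.

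The next step is to verify that with these choices $\rho$ is analytic near the exceptional set. This is exactly the content of the explicit formulas \eqref{eqbldn3} and \eqref{eqbldn4}: in each of the two charts $X_{v_1}$, $X_{v_2}$, the map $\rho$ is expressed in terms of $z_1(w), z_2(w)$ as products of powers of the $z_i(v_j)$ with square roots of smooth nonvanishing real functions of $\widehat p$, $p_1$, $p_2$ and $\delta(\widehat p)$. Near the exceptional sphere we have $\widehat p$ small, so $\delta(\widehat p) = \widehat p^{1/m}$, and the ``extra'' factors like $\sqrt{\delta(\widehat p)^k/\widehat p^{k/m}}$ become constants ($=1$); hence the local expressions for $\rho$ reduce to genuine holomorphic monomials in the holomorphic coordinates. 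Thus on a neighborhood $U$ of $\pi^{-1}(E_2)$ the lift of $\rho$ to the orbifold charts is holomorphic, i.e. $\rho$ is analytic near the exceptional set in the sense that it pulls back the germ of complex structure at $w$ to the germ at the exceptional sphere.

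It then remains to extend $J_1$ over the rest of ${\bf X}$ and $J_2$ over the rest of ${\bf Y}$, keeping them torus invariant and orthogonal, and to check that $\rho$ is an almost complex orbifold morphism on the complement of the exceptional set. For the extension I would invoke Theorem \ref{acs} essentially verbatim: having prescribed $J_1$ near all vertices of $P$ (including $v_1, v_2$), extend it over the $1$-skeleton using positivity of the omniorientation and the contractibility argument on $SO(2)/U(1)$, then over $\pi^{-1}(D)$ using connectedness and simple connectedness of $SO(4)/U(2)$; similarly for $J_2$ over ${\bf Y}$, starting from the prescribed $J_2$ near $w$ and the other vertices. Away from $\pi^{-1}(E_2)$, Theorem \ref{bldn} tells us $\rho$ is a diffeomorphism of orbifolds; to make it an almost complex morphism there, one uses that on $U \setminus \pi^{-1}(E_2)$ it is already holomorphic for the chosen structures, and on the region where $\rho = \mathrm{id}$ (outside $X_{v_1}\cup X_{v_2}$) we simply let $J_1$ and $J_2$ agree via this identification; the overlap region $U \setminus \pi^{-1}(E_2)$ is where $\delta(\widehat p)$ interpolates, and there $\rho$ is still holomorphic by the explicit formulas, so $d\rho \circ J_1 = J_2 \circ d\rho$ on all of $\pi^{-1}(E_2)^c$.

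The main obstacle I anticipate is the gluing/compatibility bookkeeping rather than any single hard theorem: one must ensure simultaneously that (i) the metrics $\mu_1, \mu_2$ can be chosen flat in all three special charts without conflicting with the $T_N$-invariance required elsewhere, (ii) the prescribed near-vertex complex structures $J_1, J_2$ are mutually consistent where $\rho$ is the identity, so that $J_1$ literally equals $\rho^* J_2$ off the exceptional locus, and (iii) the interpolation function $\delta$ is arranged so that the ``correction'' square-root factors in \eqref{eqbldn3}–\eqref{eqbldn4} are smooth, positive, and real, making the local expressions honestly holomorphic and not merely smooth. Point (iii) in particular requires checking that quantities such as $\delta(\widehat p)^k p_1/\widehat p^{k/m}$ and $\delta(\widehat p) p_2/\widehat p^{1/m}$ extend to smooth positive functions across $\widehat p = 0$ in the $w$-coordinates, which follows from the Jacobian computation in the proof of Theorem \ref{bldn} showing $f$ is a diffeomorphism there; assembling all of this coherently is the delicate part.
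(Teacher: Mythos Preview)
There is a genuine gap in your argument. You claim that near the exceptional sphere, where $\delta(\widehat p)=\widehat p^{1/m}$, the correction factors in \eqref{eqbldn3}--\eqref{eqbldn4} reduce to constants and the formulas become holomorphic monomials. This is not true. In \eqref{eqbldn3} the second component becomes $z_2(w)=z_2(v_1)^{1/m}\sqrt{p_2}$, and in \eqref{eqbldn4} the first component becomes $z_1(w)=z_1(v_2)^{k/m}\sqrt{p_1}$. The leftover factors $\sqrt{p_2}$ and $\sqrt{p_1}$ are real, nonconstant functions of $|z_i(v_j)|$ (e.g.\ $p_2=|z_2(v_1)|^2 - s|z_1(v_1)|^2 + t$), so these expressions are not holomorphic in the standard coordinates $z_i(v_j)$. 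More generally, your point (iii) suggests that a smooth positive real factor times a holomorphic monomial is ``honestly holomorphic''; it is not, unless the factor is constant. For the same reason the standard complex structures on $\widetilde X_{v_1}$ and $\widetilde X_{v_2}$ do not even agree on the overlap (see \eqref{trans}, which also carries factors $\sqrt{p_1}^{\pm}$, $\sqrt{p_2}$), so your $J_1$ is not well-defined on a full neighborhood of the exceptional sphere.

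The paper's proof repairs exactly this point by introducing \emph{new} coordinates $z_i^{\prime}(v_j)$ on $\widetilde X_{v_1}$ and $\widetilde X_{v_2}$ (equations \eqref{zv1}, \eqref{zv2}) that absorb all of the non-holomorphic real factors, including the interpolation function $\delta(\widehat p)$, into the coordinate change itself. In these primed coordinates $\rho$ is the pure monomial map \eqref{holobldn1}, \eqref{holobldn2} on the entire tubular neighborhood $\pi^{-1}(U_2)$ (not just where $\widehat p<\epsilon_1$), and the two charts glue holomorphically by \eqref{holobldn3}. The structure $J_1$ is declared by making the $z_i^{\prime}(v_j)$ holomorphic, and $J_2$ is obtained on $Y\setminus\{w\}$ by pushing $J_1$ forward via the diffeomorphism $\rho$ rather than by an independent construction; one then checks it matches the standard structure on $Y_w$. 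This sidesteps both your compatibility issue on the overlap and your (incorrect) claim that $\rho$ is holomorphic in the interpolation region for the unmodified structures.
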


 \begin{proof}
 We use the notation of theorem \ref{bldn}.
 It is convenient to make the following changes of coordinates. On the
chart $\widetilde{X}_{v_1}$, define
 (compare to equation \eqref{coords}),
\begin{equation}\label{zv1} \begin{array}{lll}
   z_1^{\prime}(v_1) =  z_1(v_1)  \sqrt{\frac{1}{p_2^{k}}},
   & \quad
& z_2^{\prime}(v_1) = z_2(v_1)
\sqrt{\frac{\delta(\widehat{p})^{m}p_2^{m}}{\widehat{p}}}.
   \end{array}
   \end{equation}
This is a valid change of coordinates since
$\frac{\delta(\widehat{p})^{m}}{\widehat{p}}$ and $p_2$ are
nonzero. In these coordinates the map $\rho: X_{v_1} \to Y_w $
takes the form
\begin{equation}\label{holobldn1}  \begin{array}{lll}
   z_1(w) = z_1^{\prime}(v_1) z_2^{\prime}(v_1)^{\frac{k}{m}} ,& \quad
   & z_2(w) = z_2^{\prime}(v_1)^{\frac{1}{m}}.
   \end{array} \end{equation}

Similarly on the chart $\widetilde{X}_{v_2}$ we choose new
coordinates as follows,
    \begin{equation}\label{zv2} \begin{array}{lll}
   z_1^{\prime}(v_2) =  z_1(v_2) \sqrt{\frac{\delta(\widehat{p})^m
   p_1^{\frac{m}{k}} }{\widehat{p}}},
   & \quad & z_2^{\prime}(v_2) = z_2(v_2) \sqrt{p_1^{-\frac{1}{k}}}.
   \end{array}
   \end{equation}

 In these coordinates the map $\rho: X_{v_2} \to Y_w $ takes the form
 \begin{equation}\label{holobldn2}  \begin{array}{lll}
   z_1(w) = z_1^{\prime}(v_2)^{\frac{k}{m}}, & \quad
   & z_2(w) = z_1^{\prime}(v_2)^{\frac{1}{m}}z_2^{\prime}(v_2).
   \end{array} \end{equation}

 Let $U_2$ be a small tubular neighborhood of the edge $E_2$ in $P$.
 Choose complex structures on $\pi^{-1}(U_2) \cap X_{v_1}$ and
  $\pi^{-1}(U_2) \cap X_{v_2}$ by declaring the coordinates
  $z^{\prime}_1(v_1),z^{\prime}_2(v_1) $ and respectively $z^{\prime}_1(v_2),
  z^{\prime}_2(v_2)$ to be holomorphic. These complex structures agree on the
  intersection and define a complex structure $J_1$ on $\pi^{-1}(U_2)$ since
  the coordinates are related as follows.
    \begin{equation}\label{holobldn3} \begin{array}{lll}
   z_1^{\prime}(v_2) =  z_1^{\prime}(v_1)^{\frac{m}{k}} z_2^{\prime}(v_1)
   & \quad & z_2^{\prime}(v_2) = z_1^{\prime}(v_1)^{-\frac{1}{k}}.
   \end{array}
   \end{equation}

   Consider the neighborhood $V:=f(U_2)$ in $\widehat{P}$. On $\pi^{-1}(V) \subset Y_w$
   choose a complex structure $J_2$ by declaring the coordinates $z_1(w), z_2(w)$ to be
   holomorphic. Consequently by equations \eqref{holobldn1} and \eqref{holobldn2}, the
   blowdown map $\rho$ is analytic on $\pi^{-1}(U_2)$.

    We will extend $J_1$ to an almost complex structure on ${\bf X}$. For this purpose
    choose standard metrics  $\mu_j$,
  with respect to the coordinates $(z_1^{\prime}(v_j), z_2^{\prime}(v_j))$,
 on $\pi^{-1}(U_2) \cap X_{v_j}$ for $j=1,2$. Let $\{W_1, W_2\}$ be an open
cover of $U_2$ such that $v_j \in W_j$ and $ W_j^c$ contains a neighborhood
of $v_k$ for $j,k=1,2$ and $j \neq k$.
 Glue $\mu_1$ and $\mu_2$
 by a torus invariant partition of
   unity subordinate to the cover $\{ \pi^{-1}( W_1), \pi^{-1}( W_2)\} $.
   This produces a metric $\mu^{\prime}$ on $\pi^{-1} (U_2)$ such that $J_1$ is orthogonal with respect
    to $\mu^{\prime}$.
 Moreover $\mu^{\prime}$ is standard with respect to the given coordinates near $v_1$ and
 $v_2$.
 Let $U \subset U_2$ be a smaller tubular neighborhood of $E_2$ in $P$.
    Using suitable partition of unity,
     extend $\mu^{\prime}|_U$ to a torus invariant metric $\mu$ on
      ${\bf X}$ such that $\mu $ is standard
      with respect to our choice of coordinates near each vertex.
    Then extend $J_1|_U$ first to the union of the $1$-skeleton of $X$
   and $\pi^{-1}(U)$, and then to entire ${\bf X}$ as a torus invariant orthogonal almost
   complex structure. This can be done in a way similar to the proof of theorem \ref{thmacs}.

 Now  ${\bf X} \bigcap \pi^{-1}(E_2)^c$ is diffeomorphic
 to ${\bf Y} \bigcap \{w\}^c$ via the blowdown map. Thus $J_2^{\prime}:=
  d\rho \circ J_1 \circ d\rho^{-1}$
 is an almost complex structure on ${\bf Y} \bigcap \{w\}^c$. We have the following
 equalities  for a point in the intersection
$({\bf Y} \bigcap \{w\}^c) \bigcap \pi^{-1}(f(U))$.
\begin{equation}
   J_{2} =(d\rho)(d\rho)^{-1}J_{2}=(d\rho)J_1(d\rho)^{-1}=J_{2}^{\prime}
   \end{equation}
    The second equality is due to complex analyticity of $\rho^{-1}$ on
    $ \pi^{-1}(f(U)) \bigcap \{w\}^c$.
    So the two structures are the same and we obtain a torus invariant almost complex
    structure on ${\bf Y}$, which we denote again by $J_2$
  without confusion. The blowdown $\rho$ is an almost complex diffeomorphism
    of orbifolds away from the exceptional set $\pi^{-1}(E_2) \subset {\bf X}$ and the point
    $w \in {\bf Y}$ with respect to $J_1$ and $J_2$. It is an analytic map of complex analytic
    spaces near  $\pi^{-1}(E_2)$ and $w$.
\end{proof}

\begin{defn}\label{smfn} A complex valued continuous function $f: X \to \CC $
 on an orbifold ${\bf X}$ is called smooth if  $f \circ \xi $ is
smooth for any orbifold chart $(\widetilde{U},G,\xi) $. We denote
the sheaf of smooth functions on ${\bf X}$ by $\mathcal{S}_X$.
This is a sheaf on the underlying space $X$ but depends on the
orbifold structure. A smooth function $f$ on an almost complex
orbifold $({\bf X}, J)$ is said to be $J$-holomorphic if the
differential $d(f \circ \xi)$ commutes with $J$ for every chart
$(\widetilde{U},G,\xi)$. We denote the sheaf of $J$-holomorphic
functions on ${\bf X}$ by $\Omega^0_{J,X}$. A continuous map
$\rho: X \to Y $ between almost complex orbifolds $({\bf X}, J_1)$
and $({\bf Y}, J_2) $ is said to be pseudo-holomorphic if $f \circ
\rho \in \Omega^0_{J_1,X}(\rho^{-1}(U))$ for every $f \in
\Omega^0_{J_2,Y}(U) $ for any open set $U  \subset Y$; that is,
$\rho$ pulls back pseudo-holomorphic functions to
pseudo-holomorphic functions.
\end{defn}

\begin{corollary}\label{smbd} The blowdown map $\rho$ of theorem \ref{bldn} is
 pseudo-holomorphic with respect to the almost complex structures given in
theorem \ref{holo}.
\end{corollary}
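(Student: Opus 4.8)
The statement asks us to verify that the blowdown $\rho$ pulls back $J_2$-holomorphic functions to $J_1$-holomorphic functions, in the sense of Definition \ref{smfn}. The key observation is that this is a local assertion about charts, and we already have precise local data: away from the exceptional sphere $\pi^{-1}(E_2)$ (resp.\ its image $w$), Theorem \ref{bldn} tells us $\rho$ is a diffeomorphism of orbifolds, and Theorem \ref{holo} upgrades this to an almost complex orbifold morphism there; near the exceptional sphere, Theorem \ref{holo} tells us $\rho$ is analytic on $\pi^{-1}(U_2)$ with respect to the complex structures $J_1$ on $\pi^{-1}(U_2)$ and $J_2$ on $\pi^{-1}(V)$.

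First I would fix an open set $U \subset Y$ and $f \in \Omega^0_{J_2,Y}(U)$, and reduce to checking $f\circ\rho \in \Omega^0_{J_1,X}(\rho^{-1}(U))$ on two overlapping open regions covering $\rho^{-1}(U)$: the region $\rho^{-1}(U) \cap \pi^{-1}(U_2)$ near the exceptional sphere, and the complement $\rho^{-1}(U) \cap \pi^{-1}(E_2)^c$ away from it. Since $\Omega^0_{J_1,X}$ is a sheaf, it suffices to check the $J_1$-holomorphy of $f\circ\rho$ on each piece. For the second region, $\rho$ restricts to an almost complex orbifold morphism by Theorem \ref{holo}, so by definition of orbifold morphism it lifts locally to equivariant maps on uniformizing systems whose differentials commute with the lifted almost complex structures; composing with a chart for $f$ and using that $d(f\circ\xi)$ commutes with $J_2$ shows $d(f\circ\rho\circ\widetilde\xi)$ commutes with $J_1$, as required. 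For the first region, I would use the explicit charts: on $\widetilde{X}_{v_1}$ (resp.\ $\widetilde{X}_{v_2}$) the map $\rho$ lifts via equations \eqref{holobldn1} (resp.\ \eqref{holobldn2}) to a genuine holomorphic map into the chart $\widetilde{Y}_w$ with holomorphic coordinates $z_1(w), z_2(w)$; since $f$ is $J_2$-holomorphic, $f\circ\xi_w$ is holomorphic in $z_1(w),z_2(w)$, and the composite of two holomorphic maps is holomorphic, hence $f\circ\rho$ lifted to $\widetilde{X}_{v_j}$ is holomorphic in $z'_1(v_j), z'_2(v_j)$, i.e.\ its differential commutes with $J_1$. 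One must also note these lifts are equivariant for the respective local groups (which holds because the coordinate changes were $T$-equivariant), so they genuinely define a map of orbifold charts; smoothness of $f\circ\rho$ itself follows since $\rho$ is smooth on each chart (established in the proofs of Theorems \ref{bldn} and \ref{holo}).

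The only subtlety, and the step I would be most careful about, is the matching of chart data where the two regions overlap — i.e.\ on $\pi^{-1}(U_2)\cap\pi^{-1}(E_2)^c$ one could compute $J_1$-holomorphy either from the analytic description \eqref{holobldn1}--\eqref{holobldn2} or from the orbifold-morphism description of Theorem \ref{holo}, and I would point out that these agree precisely because, in the proof of Theorem \ref{holo}, the almost complex structure $J_1$ on ${\bf X}$ was extended from the very complex structure on $\pi^{-1}(U)$ defined by the coordinates $z'_i(v_j)$, and $J_2$ on ${\bf Y}$ was shown to equal $d\rho\circ J_1\circ d\rho^{-1}$ there. Thus there is no inconsistency, the two local verifications glue, and the sheaf-theoretic conclusion $f\circ\rho\in\Omega^0_{J_1,X}(\rho^{-1}(U))$ follows. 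Since $U$ and $f$ were arbitrary, $\rho$ is pseudo-holomorphic. \qed
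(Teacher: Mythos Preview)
Your reduction to the two regions is sound, and the argument away from the exceptional sphere is fine. The gap is in the analytic region near $\pi^{-1}(E_2)$. You assert that \eqref{holobldn1} and \eqref{holobldn2} give a ``genuine holomorphic map'' from $\widetilde{X}_{v_j}$ into the chart $\widetilde{Y}_w$, and that these lifts are equivariant and ``genuinely define a map of orbifold charts.'' This is false: the formulas \eqref{holobldn1}, \eqref{holobldn2} involve fractional powers $z_2^{\prime}(v_1)^{1/m}$, $z_1^{\prime}(v_2)^{1/m}$, so no single-valued lift $\widetilde{X}_{v_j}\to\widetilde{Y}_w$ exists. Indeed the paper states explicitly in the introduction to Section~\ref{blow} that the blowdown \emph{is not} an orbifold morphism near the exceptional set precisely because it cannot be lifted to a continuous equivariant map on charts. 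Hence ``composite of holomorphic maps is holomorphic'' does not apply directly.

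What is actually needed, and what the paper's proof does, is to use that a $J_2$-holomorphic function on $Y_w$ lifts to a $G_w$-\emph{invariant} holomorphic function on $\widetilde{Y}_w$. The ring of $\ZZ_m$-invariants is generated by monomials $z_1(w)^i z_2(w)^j$ with $m \mid (ik+j)$, and only for these does the formal pullback via \eqref{holobldn1} produce integer exponents, namely $z_1^{\prime}(v_1)^i z_2^{\prime}(v_1)^{(ik+j)/m}$. This is the step that makes $f\circ\rho\circ\xi_{v_1}$ genuinely holomorphic. For the chart at $v_2$ there is a second check you omit entirely: the pullback must also land in the $G_{v_2}=\ZZ_k$-invariant subring, which requires the divisibility condition $k \mid \bigl(\tfrac{ik+j}{m}\cdot m - j\bigr)$; the paper verifies this explicitly. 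Without these invariance computations the argument is incomplete.
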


\begin{proof} Since $\rho$ is a an almost complex diffeomorphism of orbifolds
away from the exceptional sphere $\pi^{-1}(E_2)$, it suffices to
check the statement on the open sets $X_{v_1}$ and $X_{v_2}$. The
ring $\Omega^0_{J_1, X}(X_{v_1})$ is the ring of convergent power
series in variables $z_i^{\prime}(v_1)$ as $v_1$ is a smooth
point. The ring $\Omega^0_{J_2, Y}(Y_{w})$ is the
$\ZZ_m$-invariant subring of convergent power series in variables
$z_i^{\prime}(w)$. Let $\eta $ denote a primitive $m$-th root of
unity. Then the action of $\ZZ_m$ on $\widetilde{Y}_w$ is given by
$\eta \cdot(z_1(w), z_2(w)) = (\eta^k z_1(w), \eta z_2(w)) $, see
\eqref{action}. Therefore the invariant subring  is generated by
$\{z_1(w)^{i} z_2(w)^{j}: m|(ik + j) \}$. Using \eqref{holobldn1},
we have
\begin{equation}\label{rhostar}
z_1(w)^{i} z_2(w)^{j} \circ \rho = z_1^{\prime}(v_1)^i  z_2^{\prime}(v_1)^{\frac{ik+j}{m}}.
\end{equation}
When $m$ divides $(ik+j)$,  $z_1(w)^{i} z_2(w)^{j} \circ \rho$ belongs to $\Omega^0_{J_1, X}(X_{v_1})$.

Similarly the $\ZZ_k$ action on $\widetilde{X}_{v_2}$ is given by
$\zeta \cdot (z_1^{\prime}(v_2), z_2^{\prime}(v_2)  ) =
(\zeta^{\frac{m}{k}} z_1^{\prime}(v_2), \zeta^{\frac{-1}{k}}  z_2^{\prime}(v_2)  )$ where
 $\zeta$ is a primitive $k$-th root of unity. The ring
 $\Omega^0_{J_1, X}(X_{v_2})$ is generated by $\{z^{\prime}_1(v_2)^{a}
z^{\prime}_2(v_2)^{b}: k|(am - b) \}$. Using \eqref{holobldn2} we
have,
\begin{equation}
z_1(w)^{i} z_2(w)^{j} \circ \rho = z_1^{\prime}(v_2)^{\frac{ik +j}{m}}
    z_2^{\prime}(v_2)^j.
\end{equation}
Since $ \frac{ik +j}{m} m - j = ik   $ is divisible by $k$,
$z_1(w)^{i} z_2(w)^{j} \circ \rho$ belongs to $\Omega^0_{J_1,
X}(X_{v_2})$ if $z_1(w)^{i} z_2(w)^{j} \in \Omega^0_{J_2,
Y}(Y_{w}) $.
\end{proof}

\begin{remark} The blowdown map $\rho$ does not pull back a smooth
function to a smooth function in general.
\end{remark}

\section{McKay correspondence}

\begin{defn} Given an almost complex $2n$-dimensional orbifold $({\bf X}, J)$, we define the
 canonical sheaf $K_{X}$ to be the sheaf of continuous $(n,0)$-forms on $X$;
  that is, for any orbifold chart
$(\widetilde{U},G,\xi)$ over an open set $U \subset X$,
 $K_X (U) = \Gamma( \wedge^n \mathcal{T}^{1,0}(\widetilde{U})^{\ast})^G$ where $\Gamma$ is the functor
 that takes continuous sections.
\end{defn}

An orbifold singularity $\CC^n /G$ is said to be $SL$ if $G$ is a
finite subgroup of  $SL(n, \CC)$. For an $SL$-orbifold ${\bf X}$,
i.e. one whose singularities are all $SL$, the canonical sheaf is
a complex line bundle over $X$.

\begin{defn}\label{defcrep}
 A pseudo-holomorphic blowdown map $\rho:{\bf X}\to {\bf Y}$ between two four
 dimensional primitive positively omnioriented quasitoric $SL$ orbifolds is said to be crepant if
 $\rho^{\ast} K_Y = K_X$.
\end{defn}

\begin{lemma}\label{SL} The orbifold singularities corresponding to the characteristic vectors
$\lambda_1 =(1,0), \, \lambda_2 = (-k,m) $ and
$\lambda_1 =(0,1), \, \lambda_2 = (-k,m) $ are $SL$ if $k+1 =m$.
\end{lemma}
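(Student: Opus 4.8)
The plan is to unwind the definition of the local group at the vertex in question, read off its linearized action on the uniformizing chart via equation \eqref{action}, and then check directly that the determinant character of that action is trivial exactly when $m=k+1$.

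First I would recall that at a vertex $v$ whose two adjacent edges carry characteristic vectors $\lambda_1,\lambda_2$, the local group is $G_v = N/N(v)$ with $N(v) = \ZZ\lambda_1 + \ZZ\lambda_2$, so $|G_v| = |\det[\lambda_1,\lambda_2]|$. An element of $G_v$ is represented by a vector $g = a_1\lambda_1 + a_2\lambda_2 \in N$ with $a_1,a_2 \in \QQ$, taken modulo $\ZZ$ in each slot (adding $\lambda_i$ to $g$ shifts $a_i$ by $1$), and by \eqref{action} it acts on the chart $\CC^2$ with coordinates $z_1(v),z_2(v)$ as $(z_1,z_2)\mapsto (e^{2\pi\sqrt{-1}a_2}z_1,\, e^{2\pi\sqrt{-1}a_1}z_2)$. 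Hence the determinant of this action is $e^{2\pi\sqrt{-1}(a_1+a_2)}$, so $g\in SL(2,\CC)$ iff $a_1+a_2\in\ZZ$, and the singularity $\CC^2/G_v$ is $SL$ iff $a_1+a_2\in\ZZ$ for every element of $G_v$.

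Next I would enumerate $G_v$ in each case by solving the lattice-membership condition $a_1\lambda_1 + a_2\lambda_2 \in \ZZ^2$. For $\lambda_1=(1,0)$, $\lambda_2=(-k,m)$ one finds $a_2 \equiv j/m$ and $a_1 \equiv kj/m \pmod{\ZZ}$ for $j=0,1,\dots,m-1$, so that $a_1+a_2 \equiv (k+1)j/m \pmod{\ZZ}$; this is an integer for every $j$ precisely when $m \mid (k+1)$, and in particular when $m=k+1$. For $\lambda_1=(0,1)$, $\lambda_2=(-k,m)$ one finds $a_2\equiv j/k$ and $a_1\equiv -mj/k \pmod{\ZZ}$ for $j=0,1,\dots,k-1$, so $a_1+a_2 \equiv (1-m)j/k \pmod{\ZZ}$, which is an integer for every $j$ precisely when $k \mid (m-1)$, again in particular when $m=k+1$. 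In both cases the hypothesis $k+1=m$ forces the $G_v$-action to lie in $SL(2,\CC)$, proving the lemma.

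The argument is entirely elementary; the only point needing care is correctly matching the diagonal weights in \eqref{action} to the coordinates $z_i(v)$ on the uniformizing chart and the bookkeeping of the rational exponents modulo $\ZZ$ (one could alternatively invert the matrix $[\lambda_1\ \lambda_2]$ as in \eqref{cocoor2} and read the $G_v$-weights off its columns, cf.\ the computations in the proof of Theorem \ref{bldn}). I do not expect a genuine obstacle beyond this bookkeeping.
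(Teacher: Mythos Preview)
Your proof is correct and follows essentially the same route as the paper: both identify the $SL$ condition as $a_1+a_2\in\ZZ$ for the action \eqref{action} and verify this from the integrality of the coordinates of $a_1\lambda_1+a_2\lambda_2$; the paper simply adds the two coordinate conditions directly rather than parametrizing $G_v$ by $j$, but the substance is identical.
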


\begin{proof} Consider the case $\lambda_1 =(1,0), \, \lambda_2 = (1- m,m) $.
 Refer to the description of the action of the local group in equation \eqref{action}.
Any element of this group is represented by an integral vector $a_1 \lambda_1 +
a_2 \lambda_2$. Integrality of such a vector implies that $a_1  + a_2 - m a_2$ and
$m a_2$ are integers. Hence $a_1 +a_2$ is also an integer. This implies that the group
acts as a subgroup of  $SL(2, \CC)$.
The other case is similar.
\end{proof}

\begin{lemma}\label{crepant} Suppose ${\bf X}$ and ${\bf Y}$ are two $4$-dimensional primitive, positively
omnioriented, quasitoric $SL$ orbifolds and  $\rho: {\bf X}\to
{\bf Y}$ is a pseudo-holomorphic  blowdown as constructed in
theorems \ref{bldn} and \ref{holo}. Then $\rho$  is crepant if and
only if $ k+1=m$.
\end{lemma}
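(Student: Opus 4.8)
The plan is to reduce the assertion to a single local computation: the pullback under $\rho$ of a generator of $K_Y$ near the point $w\in{\bf Y}$ to which the exceptional sphere $\pi^{-1}(E_2)$ collapses. Since ${\bf X}$ and ${\bf Y}$ are $SL$, the sheaves $K_X$ and $K_Y$ are line bundles; and since $\rho$ is an almost complex diffeomorphism of orbifolds away from $\pi^{-1}(E_2)$ by theorem \ref{holo}, $\rho^{\ast}K_Y$ is canonically identified with $K_X$ over $X\setminus\pi^{-1}(E_2)$. So it suffices to compare the two bundles on the charts $X_{v_1}$ and $X_{v_2}$, which together cover a neighbourhood of $\pi^{-1}(E_2)$. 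I work throughout with the coordinates $z_1^{\prime}(v_1),z_2^{\prime}(v_1)$ on $\widetilde{X}_{v_1}$, $z_1^{\prime}(v_2),z_2^{\prime}(v_2)$ on $\widetilde{X}_{v_2}$ and $z_1(w),z_2(w)$ on $\widetilde{Y}_w$, together with the normal forms \eqref{holobldn1}, \eqref{holobldn2} of $\rho$ produced in the proof of theorem \ref{holo}.

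First I identify generators of the canonical bundles. The vertex $v_1$ is smooth, so $K_X$ is freely generated near $v_1$ by $dz_1^{\prime}(v_1)\wedge dz_2^{\prime}(v_1)$. The local group at $w$ is $\ZZ_m$, acting by $\eta\cdot(z_1(w),z_2(w))=(\eta^{k}z_1(w),\eta\,z_2(w))$ with $\eta$ a primitive $m$-th root of unity (use \eqref{action}, as in corollary \ref{smbd}), whence $\eta$ multiplies $dz_1(w)\wedge dz_2(w)$ by $\eta^{k+1}$. The $SL$ hypothesis on ${\bf Y}$ at $w$ says precisely $\eta^{k+1}=1$, i.e. $m\mid k+1$, so $dz_1(w)\wedge dz_2(w)$ is $\ZZ_m$-invariant and generates $K_Y$ near $w$. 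Since the rescalings in \eqref{zv2} are torus invariant, the $\ZZ_k$-action at $v_2$ scales $z_1^{\prime}(v_2),z_2^{\prime}(v_2)$ by the same weights as the standard coordinates, so the $SL$ hypothesis on ${\bf X}$ at $v_2$ makes $dz_1^{\prime}(v_2)\wedge dz_2^{\prime}(v_2)$ invariant, hence a generator of $K_X$ near $v_2$.

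Next I compute the pullback. On $X_{v_1}\setminus\pi^{-1}(E_2)$ choose a branch of $z_2^{\prime}(v_1)^{1/m}$; then \eqref{holobldn1} exhibits $\rho$ there as an honest biholomorphism onto an open subset of $\widetilde{Y}_w\setminus\{w\}$, and differentiating $z_1(w)=z_1^{\prime}(v_1)\,z_2^{\prime}(v_1)^{k/m}$, $z_2(w)=z_2^{\prime}(v_1)^{1/m}$ gives
\begin{equation}
\rho^{\ast}\bigl(dz_1(w)\wedge dz_2(w)\bigr)=\frac{1}{m}\,z_2^{\prime}(v_1)^{\frac{k+1-m}{m}}\;dz_1^{\prime}(v_1)\wedge dz_2^{\prime}(v_1).
\end{equation}
Since $m\mid k+1$ the exponent $\frac{k+1-m}{m}$ is an integer, so the right-hand side is single valued and holomorphic on all of $X_{v_1}$ and the identity persists across $\pi^{-1}(E_2)$. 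As $z_2^{\prime}(v_1)$ vanishes to first order exactly along $\pi^{-1}(E_2)\cap X_{v_1}$, this says $\rho^{\ast}K_Y=z_2^{\prime}(v_1)^{\frac{k+1-m}{m}}K_X$ near $v_1$, so $\rho^{\ast}K_Y=K_X$ there if and only if $\frac{k+1-m}{m}=0$, i.e. $k+1=m$. The same computation on $X_{v_2}$ starting from \eqref{holobldn2}, with $z_1^{\prime}(v_2)$ now the order-one defining function of $\pi^{-1}(E_2)\cap X_{v_2}$, yields
\begin{equation}
\rho^{\ast}\bigl(dz_1(w)\wedge dz_2(w)\bigr)=\frac{k}{m}\,z_1^{\prime}(v_2)^{\frac{k+1-m}{m}}\;dz_1^{\prime}(v_2)\wedge dz_2^{\prime}(v_2),
\end{equation}
with the identical exponent. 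Combining the two charts with the fact that $\rho$ is an almost complex diffeomorphism of orbifolds away from $\pi^{-1}(E_2)$, we conclude $\rho^{\ast}K_Y=K_X$ if and only if $k+1=m$, which is the assertion. (Primitivity of $\lambda_3$ forces $\gcd(k,m)=1$, so $0<k\le m$ together with $m\mid k+1$ give $\frac{k+1-m}{m}\ge 0$; thus $\rho^{\ast}K_Y\subseteq K_X$ always, and crepancy is the vanishing of this discrepancy.)

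The step needing the most care is the pullback identity. Because $\rho$ is not an orbifold morphism near $\pi^{-1}(E_2)$ and the normal forms \eqref{holobldn1}, \eqref{holobldn2} involve an $m$-th root, the differentiation must be performed where that root admits a branch — on the complement of $\pi^{-1}(E_2)$, where $\rho$ genuinely lifts to a biholomorphism between chart opens — and the result then extended by holomorphicity; what makes the extension legitimate is precisely that the discrepancy exponent $\frac{k+1-m}{m}$ is an integer, i.e. the $SL$ condition on ${\bf Y}$. The accompanying point, that $dz_1(w)\wedge dz_2(w)$ and $dz_1^{\prime}(v_2)\wedge dz_2^{\prime}(v_2)$ descend to generators of $K_Y$ near $w$ and of $K_X$ near $v_2$, is again just the $SL$ hypotheses read off from \eqref{action}.
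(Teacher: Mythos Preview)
Your proof is correct and follows essentially the same route as the paper: reduce to the two charts $X_{v_1}$ and $X_{v_2}$, compute $\rho^{\ast}(dz_1(w)\wedge dz_2(w))$ using the normal forms \eqref{holobldn1} and \eqref{holobldn2}, and read off the exponent $\frac{k+1}{m}-1$ of the coordinate vanishing along the exceptional sphere. You add some care the paper leaves implicit---the $\ZZ_m$-invariance of $dz_1(w)\wedge dz_2(w)$ and the integrality of the discrepancy exponent from the $SL$ hypothesis---but the argument is otherwise identical.
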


\begin{proof} We consider the canonical sheaf as a sheaf of modules
 over the sheaf of continuous functions $\mathcal{C}^0_X$.
  Since $\rho$ is an almost complex diffeomorphism away
from the exceptional set it suffices to check the equality of the
$\rho^{\ast} K_Y$ and $K_X$ on the neighborhood $\pi^{-1} (U)
\subset X$ of the exceptional set, defined in proof of theorem
\ref{holo}.

On $X_{v_1} \bigcap \pi^{-1} (U)  $, the sheaf $K_X$ is generated
over the sheaf $\mathcal{C}^0_{X}$ by the form $dz_1^{\prime}
(v_1)\wedge dz_2^{\prime}(v_1)$. On the other hand
$\rho^{\ast}(K_Y)$ is generated on this neighborhood over
$\mathcal{C}^0_{X}$ by
\begin{equation} \begin{array}{l}
 \rho^{\ast} dz_1(w)\wedge dz_2(w) \\
= d(z_1^{\prime}(v_1) z_2^{\prime}(v_1) ^{\frac{k}{m}} ) \wedge d(z_2^{\prime}(v_1)^{\frac{1}{m}})\\
= z_2^{\prime}(v_1) ^{\frac{k}{m}} dz_1^{\prime}(v_1) \wedge
\frac{1}{m}(z^{\prime}_2(v_1))^{\frac{1}{m}-1}
dz_2^{\prime}(v_1)\\
=\frac{1}{m} (z^{\prime}_2)^{\frac{k+1}{m}-1} dz_1^{\prime} (v_1)\wedge dz_2^{\prime}(v_1).
\end{array}
\end{equation}

Thus $\rho^{\ast} K_Y = K_X$ on $X_{v_1} \bigcap \pi^{-1} (U)  $ if and only if $k+1 =m$. Similarly on
 $X_{v_2} \bigcap \pi^{-1} (U)  $, the sheaf $K_X$ is generated over $\mathcal{C}^0_X$
by the form $dz_1^{\prime} (v_2)\wedge dz_2^{\prime}(v_2)$. On the
other hand $\rho^{\ast}(K_Y)$ is generated on this neighborhood
over $\mathcal{C}^0_{X}$ by
\begin{equation} \begin{array}{l}
\rho^{\ast} dz_1(w)\wedge dz_2(w) \\
= d (z_1^{\prime}(v_2)^{\frac{k}{m}}) \wedge
  d ( z_1^{\prime}(v_2)^{\frac{1}{m}}z_2^{\prime}(v_2) )\\
= \frac{k}{m} (z_1^{\prime}(v_2))^{\frac{k}{m} -1} dz_1^{\prime}(v_2)
\wedge  z_1^{\prime}(v_2)^{\frac{1}{m}} dz_2^{\prime}(v_2)\\
= \frac{k}{m} (z_1^{\prime}(v_2))^{\frac{k+1}{m} -1} dz_1^{\prime} (v_2)\wedge dz_2^{\prime}(v_2).
\end{array}
\end{equation}

Again $\rho^{\ast} K_Y = K_X$ on $X_{v_2} \bigcap \pi^{-1} (U)  $ if and only if $k+1 =m$ .
\end{proof}

It is easy to construct examples of a positively omnioriented quasitoric $SL$ manifold or orbifold
 which is not a toric variety and admits a crepant blowdown. For instance, let
${\bf X}$  be the quasitoric manifold  over a $7$-gon with characteristic
vectors $(1,0)$, $ (0,1)$, $(-1,2)$, $(-2,3)$, $(1,-2)$, $(0,1)$ and $(-1,-1)$.  Then ${\bf X}$ is
positively omnioriented, primitive and $SL$. However it is not a toric variety as its Todd genus is $2$.
The orbifold ${\bf Y}$ over a $6$-gon with characteristic vectors $(1,0)$,  $(-1,2)$, $(-2,3)$,
 $(1,-2)$, $(0,1)$ and $(-1,-1)$ is a crepant blowdown of ${\bf X}$. Same holds for the orbifold
${\bf Z}$ over a $6$-gon with characteristic
vectors $(1,0)$, $ (0,1)$, $(-2,3)$, $(1,-2)$, $(0,1)$ and $(-1,-1)$. It may be argued that ${\bf Y}$
and ${\bf Z}$ are not toric varieties as otherwise the blowup ${\bf X}$ would be a toric variety.

The singular and Chen-Ruan cohomology  groups (see \cite{[CR]}) of an almost complex quasitoric
 orbifold was calculated in \cite{[PS]}. For a four dimensional  primitive positively omnioriented
 quasitoric orbifold
${\bf X}$, the Chen-Ruan cohomology groups are
\begin{equation}\label{crc}
H^{d}_{CR}({\bf X}, \QQ) = \left\{ \begin{array}{ll}
H^{0}(X,\QQ) & {\rm if \; d=0}\\
 &\\
 H^{d}(X,\QQ) \oplus \bigoplus_{2{\rm age}(g)=d} \QQ(v,g) & {\rm if\; d>0}
\end{array} \right.
\end{equation}
Here $v$ varies over vertices of $P$, $g$ varies over nontrivial elements of the local group $G_v$ and
${\rm age}(g)$ is the degree shifting number $\iota(g)$ of \cite{[CR]}.

The singular cohomology groups of $X$ are
\begin{equation}\label{sc}
H^{d}( X, \QQ) \cong \left\{ \begin{array}{ll}
\QQ & {\rm if \; d=0,4}\\
 \bigoplus_{m-2} \QQ & {\rm if\; d=2}\\
 0 & {\rm otherwise}
\end{array} \right.
\end{equation}
where $m$ denotes the number of edges of $P$.

\begin{theorem} Suppose $\rho:{\bf X} \to {\bf Y}$ be a
 pseudo-holomorphic blowdown between four dimensional
primitive positively omnioriented quasitoric manifolds,
 as constructed in theorems \ref{bldn} and \ref{holo}. If $\rho$ is crepant then
 $$  {\rm dim} \left( H^{d}_{CR}({\bf X}, \QQ) \right) =
  {\rm dim} \left(  H^{d}_{CR}({\bf Y}, \QQ) \right).$$
\end{theorem}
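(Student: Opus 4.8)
The plan is to read off both sides from the explicit formulas \eqref{crc} and \eqref{sc}, and to observe that the crepancy relation $k+1=m$ of Lemma \ref{crepant} is exactly the bookkeeping identity that balances the computation. Since ${\bf X}$ is a quasitoric manifold, every vertex of $P$ has trivial local group, so the twisted summands in \eqref{crc} are empty and $H^d_{CR}({\bf X},\QQ)\cong H^d(X,\QQ)$; by \eqref{sc} this has dimensions $1,0,m_X-2,0,1$ in degrees $0,\dots,4$ and $0$ in all other degrees, where $m_X$ denotes the number of edges of $P$. The polytope $\widehat P$ underlying ${\bf Y}$ is obtained from $P$ by deleting the edge $E_2$ and extending its two neighbours until they meet, hence has $m_X-1$ edges, so by \eqref{sc} the space $Y$ has Betti numbers $1,0,m_X-3,0,1$ in degrees $0,\dots,4$.

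Next I would describe the twisted sectors of ${\bf Y}$. Every vertex of $\widehat P$ other than the new vertex $w$ is also a vertex of $P$, carrying the same adjacent edges and characteristic vectors, and is therefore still a smooth point of ${\bf Y}$. In the normalization of Theorem \ref{bldn} we have $\lambda_1=(1,0)$, $\lambda_2=(0,1)$, $\lambda_3=(-k,m)$; smoothness of ${\bf X}$ at $v_2$ forces $k=1$, and then crepancy (Lemma \ref{crepant}) forces $m=k+1=2$. Thus $G_w\cong\ZZ_2$, and by \eqref{action} its nontrivial element acts on $\mathcal T_w{\bf Y}$ as $(z_1,z_2)\mapsto(-z_1,-z_2)$; its age is $\tfrac12+\tfrac12=1$, so by \eqref{crc} it contributes exactly one copy of $\QQ$ to $H^2_{CR}({\bf Y},\QQ)$ and nothing in any other degree. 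Hence $H^d_{CR}({\bf Y},\QQ)$ has dimensions $1,0,m_X-2,0,1$ in degrees $0,\dots,4$ (the degree-$2$ term being $(m_X-3)+1$), agreeing with $H^d_{CR}({\bf X},\QQ)$ in every degree; all remaining degrees vanish on both sides, since $H^\ast(X)$ and $H^\ast(Y)$ are concentrated in even degrees $\leq 4$ and every nontrivial stabilizer in a four-dimensional $SL$ orbifold has age exactly $1$ (the degree-shifting number is a nonnegative integer less than $2$, and it is nonzero because the local action is effective), so all twisted sectors sit in degree $2$.

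The argument is thus essentially bookkeeping; the points needing care are that the blowdown deletes precisely one edge of the polytope, that no local group other than $G_w$ is altered, and the elementary age computation. To make the role of crepancy transparent I would record the identity $\dim H^2_{CR}({\bf Z},\QQ)=(m_{\bf Z}-2)+\sum_v\bigl(|G_v|-1\bigr)$, valid for any four-dimensional primitive positively omnioriented quasitoric $SL$ orbifold ${\bf Z}$ with $m_{\bf Z}$ edges (the sum being over the vertices of its polytope): under a blowdown of the type considered here the right-hand side changes by $-1+(m-k)$, which vanishes precisely when $k+1=m$. There is no analytic obstacle; the content of the theorem is exactly this match between crepancy and the drop in the ordinary Betti numbers.
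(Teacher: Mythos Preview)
Your argument is correct and follows the same bookkeeping strategy as the paper: compare the contributions of $E_2$, $v_1$, $v_2$ on the ${\bf X}$ side with that of $w$ on the ${\bf Y}$ side using formulas \eqref{crc} and \eqref{sc}. Two differences are worth flagging. First, you take the word ``manifolds'' in the statement literally and deduce $k=1$, $m=2$; the paper's own proof in fact treats the general situation where $v_2$ may be singular (so $G_{v_2}\cong\ZZ_k$ with $k\ge 1$), which is what the definition of crepant blowdown and Lemma \ref{crepant} set up---your closing identity $\dim H^2_{CR}({\bf Z},\QQ)=(m_{\bf Z}-2)+\sum_v(|G_v|-1)$ and the computation that the change equals $-1+(m-k)$ recover exactly this general case, so your write-up does cover it. Second, where the paper computes ${\rm age}(g^p)$ and ${\rm age}(h^q)$ element by element, you invoke the cleaner observation that for a finite subgroup of $SL(2,\CC)$ every nontrivial element has age exactly $1$; this is a genuine simplification and is what makes your closed formula for $\dim H^2_{CR}$ possible.
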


\begin{proof} By formulas \eqref{crc} and \eqref{sc}, it
  suffices to compare the contributions of the edge $E_2$ and vertices $v_1,\, v_2$ of $P$
to $H^{d}_{CR}({\bf X}, \QQ) $ with the contribution of the vertex $w$ of $\widehat{P}$ to $H^{d}_{CR}({\bf Y}, \QQ) $.

The edge $E_2$ contributes a single generator to $H^{2}_{CR}({\bf X}, \QQ) $.
The vertex $v_1$  has no contribution as it is a smooth point and $G_{v_1}$ is trivial.
The group $G_{v_2}$ is isomorphic to $\ZZ_k$. Assume $\rho$ is crepant. Then by
lemma \ref{crepant} $m = k+1$ and the characteristic vector $\lambda_3 =(-k,k+1)$.
Recall that $\lambda_2=(0,1)$.
 The elements of $G_{v_2}$ are
\begin{equation}
g^p := (-p,0) = \frac{-(k+1)p}{k}  \lambda_2 + \frac{p}{k}\lambda_3 \;
{\rm where} \;
0\le p \le k-1.
\end{equation}
By equations \eqref{action} and \eqref{zv2} the action of $g^p$ is given by
\begin{equation}
g^p \left( z_1^{\prime}(v_2), \, z_2^{\prime}(v_2) \right)
  =\left(e^{2\pi\sqrt{-1} (-p- \frac{p}{k})} z_1^{\prime}(v_2), \;
  e^{2\pi\sqrt{-1} \frac{p}{k}} z_2^{\prime}(v_2)\right).
    \end{equation}
   The degree shifting number
   \begin{equation}
   {\rm age}(g^p) = (1-\frac{p}{k}) + \frac{p}{k} = 1.
   \end{equation}
   Thus each $g^p$, $1\le p \le k-1$, contributes a generator $(v_2,g^p)$ to
   $H^{2}_{CR}({\bf X}, \QQ) $.

   The characteristic vectors at $w$ are $\lambda_1=(1,0)$ and
   $\lambda_3=(-k,k+1)$. The group $G_w \cong \ZZ_{k+1}$ has elements
   \begin{equation}
h^q := (0,q) = \frac{qk}{k+1}  \lambda_1 + \frac{q}{k+1}\lambda_3 \;
{\rm where} \;
0\le q \le k.
\end{equation}
   By equations \eqref{action} the action of $h^q$ is given by
\begin{equation}
h^q \left( z_1(w), \, z_2(w) \right)
  =\left(e^{2\pi\sqrt{-1} (q- \frac{q}{k+1})} z_1(w), \;
  e^{2\pi\sqrt{-1} \frac{q}{k+1}} z_2(w)\right).
    \end{equation}
   The degree shifting number
   \begin{equation}
   {\rm age}(h^q) = (1-\frac{q}{k+1}) + \frac{q}{k+1} = 1.
   \end{equation}
   Thus each $h^q$, $1\le q \le k$, contributes a generator $(w,h^q)$ to
   $H^{2}_{CR}({\bf Y}, \QQ) $.

   Hence the contributions of the edge $E_2$ and vertices $v_1,\, v_2$ of $P$
to dimension of $H^{d}_{CR}({\bf X}, \QQ) $ match the contribution of the vertex
$w$ of $\widehat{P}$ to dimension of
$H^{d}_{CR}({\bf Y}, \QQ) $.

\end{proof}

{\bf ACKNOWLEDGEMENT.} It is a pleasure to thank Indranil Biswas, Mahuya Datta,
 Goutam Mukherjee, B. Doug Park and Soumen Sarkar for many useful
 conversations. We also thank Mikiya Masuda and an anonymous referee for pointing out some errors
  in earlier drafts. We thank NBHM and Universidad de los Andes for providing financial
  support for our research activities.

\renewcommand{\refname}{References}

\vspace{2cm}

\end{document}